\date{}
\begin{document}


\centerline{}

\centerline{}

\centerline {\Large{\bf Levy's phenomenon for analytic functions}}

\centerline{\Large{\bf  in the polydisc}}

\centerline{}

\centerline{\bf {A. O. Kuryliak}}

\centerline{}

\centerline{Department of Mechanics and Mathematics,}

\centerline{Ivan Franko National University of L'viv, Ukraine}

\centerline{kurylyak88@gmail.com}

\centerline{}

\centerline{\bf {O. B. Skaskiv}}

\centerline{}

\centerline{Department of Mechanics and Mathematics,}

\centerline{Ivan Franko National University of L'viv, Ukraine}

\centerline{matstud@franko.lviv.ua}

\centerline{}

\centerline{\bf {S. R. Skaskiv}}

\centerline{}

\centerline{Department of Mechanics and Mathematics,}

\centerline{Ivan Franko National University of L'viv, Ukraine}

\centerline{matstud@franko.lviv.ua}

\centerline{}

\newtheorem{Theorem}{\quad Theorem}[section]

\newtheorem{Definition}[Theorem]{\quad Definition}

\newtheorem{Corollary}[Theorem]{\quad Corollary}

\newtheorem{Lemma}[Theorem]{\quad Lemma}

\newtheorem{Example}[Theorem]{\quad Example}

\begin{abstract}
In this paper we prove some analogue of Wiman's  type inequality
for random analytic functions in the polydisc $\mathbb{D}^p=\{z\in\mathbb{C}^p\colon |z_j|<1, j\in\{1,\ldots,p\}\},\ p\in\mathbb{Z}_+$. The obtained
inequality is sharp.
\end{abstract}

{\bf Subject Classification:} 30B20, 30D20 \\

{\bf Keywords:} maximum modulus, maximal term, analytic functions in the polydisc, Wiman's type inequality,
random analytic function

\section{Introduction}

By $\mathcal{A}^1$ we denote the class of analytic functions in the disc $\mathbb{D}=\{z\colon |z|<1\},$
represented by power series
 \begin{equation}\label{1}
 f(z)=\sum^{+\infty}_{n=0}a_nz^n
 \end{equation}
 with radii of convergence $R(f)=1.$
 Let $M_f(r)=\max\{|f(z)|\colon|z|=r\}$ be maximum modulus and $\mu_f(r)=\max\{|a_n|r^n\colon
 n\geq  0\}$ maximal term of $f\in\mathcal{A}^1$, $ r\in[0;1).$

For analytic~function $f\in\mathcal{A}^1$
  and every $\delta>0$ there exists {a} set $E_f(\delta)\subset (0, 1)$ {of finite} logarithmic measure on $(0,1)$, i.e. $$ \int_{E_f(\delta)}\frac{dr}{1-r}<+\infty,$$  such that
  for all $r\in(0,1)\backslash E_f(\delta)$ the inequality
\begin{equation}\label{2}
M_f(r)\leq\frac{\mu_f(r)}{(1-r)^{1+\delta}}\ln^{1/2+\delta}\frac{\mu_f(r)}{1-r}
\end{equation}
holds. Similar inequality for analytic function
in the unit disc one can find in \cite{1, 2, 4, 5}.

Also (see \cite{2}) was proved the sharpness of inequality (\ref{2}). In particular,
\begin{equation}\label{star}
\varliminf_{r\to1-0} \frac{M_g(r)}{\frac{\mu_g(r)}{1-r}\ln^{1/2}\frac{\mu_g(r)}{1-r}}\geq C>0,\
g(z)=\sum_{n=1}^{+\infty}\exp\{n^{\varepsilon}\}z^n,\ \varepsilon\in(0,1).
\end{equation}

Using the Baire categories, in \cite{10, 11} was  described the ``quantity''
of those analytic functions $f\in\mathcal{A}^1$, for which inequality~(\ref{2}) can
be improved.

We start from two statement for random analytic functions in the unit disc.
Idea of proof of second statement will be used for proof of theorem 2.4.

 Let $\Omega=[0,1]$ and $P$ be the Lebesgue measure on $\mathbb{R}.$
   We consider the Steinhaus probability space $(\Omega,{\mathcal A},P),$ where
 $\mathcal{A}$ is the $\sigma$-algebra of Lebesgue measurable subsets of $\Omega$.
 Let ${X=(X_n(t))}$ be some sequence of random variables defined in this space.
{For an analytic function of the form
 $f(z)=\sum^{+\infty}_{n=0}a_nz^n$ by} $H(f,X)$ we denote the class of random analytic functions of the form
  \begin{equation}\label{4}
    f(z,t)=\sum_{n=0}^{+\infty} a_nX_n(t)z^n.
  \end{equation}

    In the sequel, the notion ``almost surely'' will be used in the
sense that the corres\-ponding property holds almost everywhere
with respect to Lebesgue measure  $P$ on $\Omega=[0,1]$. We say
that some relation holds almost surely in the class
  $H(f,X)$ if it holds for each analytic function $f(z,t)$ of the form (\ref{4}) almost
  surely in $t$.

Let $X=(X_n(t))$
 be multiplicative system (MS) uniformly bounded by the number 1.
 That is for all $n\in\mathbb{N}$ and $t\in[0,1]$ we have $|X_n(t)|\leq 1$
 for almost all $t\in[0;1]$
 and $$
 \forall (i_1,i_2,\ldots, i_n)\in\mathbb{N}^k,\ 1\leq i_1 <i_2<\cdots
 <i_k\ \colon\ {\bf M}(X_{i_1}X_{i_2}\cdots X_{i_k})=0,$$
 where ${\bf M}\xi$ is the expected value of a random variable $\xi$.

Similarly to \cite{4} one can prove such a statement.

\begin{Theorem}[\cite{10}]
  \sl
  Let $f(z,t)$ be random analytic function of form (\ref{4}), $X\in$ MS and $|X_n|\leq1$ for almost all
  $t\in[0;1]$.
  Then almost surely in $H(f,X)$ for any $\delta>0$
   there exists a set $E=E(f, t,\delta)\subset[0,1)$ of finite logarithmic measure on $[0;1)$
   $(\int_E\frac{dr}{1-r}<+\infty)$
  such that for all $r\in[0,1)\backslash E$ we have
  \begin{equation}\label{5}
    M_f(r,t)\leq\mu_f(r)\Biggl(\frac{1}{(1-r)^2}\cdot
    \ln\frac{\mu_f(r)}{1-r}\Biggl)^{1/4+\delta}.
  \end{equation}
 \end{Theorem}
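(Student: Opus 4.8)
The plan is to follow the Kahane-type strategy alluded to in the reference to \cite{4}: recast the almost sure estimate as a Borel--Cantelli argument along a sequence of radii $r_k\uparrow1$, where at each radius the randomness replaces the crude deterministic bound $M_f(r)\le\sum_n|a_n|r^n$ by the sharper $\ell^2$-quantity $S(r):=\bigl(\sum_{n=0}^{+\infty}|a_n|^2r^{2n}\bigr)^{1/2}$, up to harmless factors. The improvement of the exponent from $\tfrac12+\delta$ to $\tfrac14+\delta$ is exactly the classical ``square-root'' effect, and it enters through two independent ingredients. The first is a purely deterministic comparison of $S(r)$ with $\mu_f(r)$: the function $H(z)=\sum_{n=0}^{+\infty}|a_n|^2z^n$ has nonnegative coefficients, so $M_H=H$ on $[0,1)$, its maximal term satisfies $\mu_H(r^2)=\mu_f^2(r)$, and $H(r^2)=S^2(r)$. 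Applying the deterministic Wiman inequality~(\ref{2}) to $H$ gives, for all $r$ outside a set of finite logarithmic measure,
\[
S^2(r)=H(r^2)\leq\frac{\mu_f^2(r)}{(1-r^2)^{1+\delta_1}}\ln^{1/2+\delta_1}\frac{\mu_f^2(r)}{1-r^2},
\]
whence $S(r)\leq C\,\mu_f(r)(1-r)^{-1/2-\delta_1}\bigl(\ln\frac{\mu_f(r)}{1-r}\bigr)^{1/4+\delta_1}$. This already produces the exponents $-\tfrac12$ on $(1-r)$ and $\tfrac14$ on the logarithm that appear in~(\ref{5}).

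The probabilistic core is to show that, up to a small power of $(1-r)$, one has $M_f(r,t)\lesssim S(r)$ almost surely. Fix a large integer $m=m(\delta)$ and put $R=\tfrac12(1+r)$, so that $R-r=\tfrac12(1-r)$. Applying the maximum principle together with the Cauchy estimate to the analytic function $f^{2m}$ yields the pointwise (in $t$) bound
\[
M_f(r,t)^{2m}=M_{f^{2m}}(r,t)\leq\frac{R}{R-r}\cdot\frac{1}{2\pi}\int_0^{2\pi}\bigl|f(Re^{i\theta},t)\bigr|^{2m}\,d\theta .
\]
Taking expectations, using Fubini, and invoking the Khinchin-type inequality for the multiplicative system,
\[
\mathbf{M}\bigl|f(Re^{i\theta},t)\bigr|^{2m}\leq (Cm)^m\Bigl(\sum_{n=0}^{+\infty}|a_n|^2R^{2n}\Bigr)^{m}=(Cm)^m S^{2m}(R),
\]
I obtain $\mathbf{M}\bigl[M_f(r,t)^{2m}\bigr]\leq\frac{R}{R-r}(Cm)^m S^{2m}(R)$.

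It remains to extract the almost sure statement. A Chebyshev estimate bounds $P\{M_f(r_k,t)>\lambda_k\}\le\lambda_k^{-2m}\mathbf{M}[M_f(r_k,t)^{2m}]$, and the choice $\lambda_k^{2m}=k^2\,\mathbf{M}[M_f(r_k,t)^{2m}]$ makes these probabilities summable. Because the $2m$-th root turns the factor $\frac{R}{R-r}\asymp(1-r)^{-1}$ into $(1-r)^{-1/2m}$ and $k^{1/m}$ into a tiny power of $\ln\frac{1}{1-r}$, this yields $\lambda_k\leq C\,S(R_k)(1-r_k)^{-\delta_2}$ with $\delta_2$ as small as one wishes once $m$ is large. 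Borel--Cantelli then gives $M_f(r_k,t)\le\lambda_k$ for all large $k$, almost surely; combining with the deterministic comparison of $S(R_k)$ and $\mu_f$, and passing from the discrete radii $r_k$ to all $r$ in their complement (using monotonicity of $r\mapsto M_f(r,t)$ and $1-r_{k+1}\asymp1-r_k$), produces~(\ref{5}) off a set $E$ of finite logarithmic measure after matching $\delta_1,\delta_2$ to $\delta$.

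The step I expect to be the main obstacle is the Khinchin-type even-moment inequality for a general multiplicative system bounded by $1$. The hypothesis furnishes only $\mathbf{M}(X_{i_1}\cdots X_{i_k})=0$ for products of \emph{distinct} variables, so controlling the mixed moments $\mathbf{M}(X_{n_1}\cdots X_{n_{2m}})$, in which indices repeat to various powers, requires the combinatorial argument of \cite{4}: one must show that the surviving (or dominating) contributions are the pair-type contractions and that their total is bounded by the diagonal term $(Cm)^m S^{2m}(R)$, using $|X_n|\le1$ to absorb the remaining repeated factors. Everything else, including the verification that the various exceptional radius-sets accumulate to finite logarithmic measure, is routine.
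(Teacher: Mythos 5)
Your deterministic step is correct, and it is in fact the same device the paper itself uses in its proof of Theorem 1.2: with $H(w)=\sum_n|a_n|^2w^n$ one has $S^2(r)=H(r^2)$, $\mu_H(r^2)=\mu_f^2(r)$, and applying~(\ref{2}) to $H$ bounds $S(r)$ by $\mu_f(r)(1-r)^{-1/2-\delta_1}\ln^{1/4+\delta_1}\frac{\mu_f(r)}{1-r}$ off a set of finite logarithmic measure. Keep in mind, however, that the paper never proves Theorem 1.1 in the text (it is quoted with the remark that it can be proved ``similarly to'' \cite{4}); the in-paper proof to compare with is that of Theorem 2.3, whose $p=1$ case is this statement, and which obtains the same square-root effect from $S^2_{N}(r)\leq\mu_f(r)\mathfrak{M}_f(r)$ together with the deterministic Theorem 2.1.

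The genuine gap is the step you dismissed as routine: passing from the fixed dyadic radii $r_k$ to all $r$. For $r\in[r_k,r_{k+1}]$ monotonicity gives $M_f(r,t)\leq M_f(r_{k+1},t)$, and your discrete bound turns this into $M_f(r,t)\lesssim S(R_{k+1})(1-r_k)^{-\delta_2}$ with $R_{k+1}=\frac12(1+r_{k+1})$; to reach~(\ref{5}) you must then dominate $\mu_f(R_{k+1})$ (which is what the deterministic bound for $S(R_{k+1})$ produces) by $\mu_f(r)$ times powers of $(1-r)^{-1}$ and of the logarithm. No such domination holds, because $\mu_f$ is not controlled between consecutive dyadic radii. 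Take the paper's own extremal-type function $f(z)=\sum e^{\sqrt n}z^n$, for which $\ln\mu_f(r)\sim\frac{1}{4(1-r)}$: with $1-r_k=2^{-k}$ one gets $\mu_f(R_{k+1})/\mu_f(r)\geq\exp\{c\,2^{k}\}$ for \emph{every} $r\in[r_k,r_{k+1}]$, while the admissible slack in~(\ref{5}), namely powers of $(1-r)^{-2}\ln\frac{\mu_f(r)}{1-r}\asymp 2^{3k}$, is only polynomial in $2^k$. So your scheme fails on all of $(r_0,1)$ --- not merely on a set of finite logarithmic measure --- for a perfectly ordinary function; the same mismatch already occurs at the discrete radii themselves, since the Cauchy-kernel step replaces $r_k$ by the larger radius $R_k$. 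The root cause is that $\mu_f$ can grow arbitrarily fast as $r\to1-0$, so no net of radii chosen independently of $f$ can work. This is precisely what the machinery in the paper's proof of Theorem 2.3 exists for: the radii are partitioned into cells $G_{kl}$ adapted to $f$, on each of which both $\ln\frac{1}{1-r}$ and $\ln\mu_f(r)$ vary by at most $1$ (see (\ref{s1})--(\ref{s3})); their union carries a probability measure $\nu$; Borel--Cantelli is run for the product measure $P\otimes\nu$; and the estimate at a $\nu$-generic point of a cell is transferred to the whole cell through the near-extremal points $r_0^{(k,l)}(t)$. You need this $f$-adapted decomposition, or at least a logarithmic-derivative lemma in the style of Lemma 3.2 (which is what limits how fast $\mu_f$ can move outside an exceptional set), as a substitute for the fixed dyadic net; without it the proof does not close.

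A secondary problem is your justification of ${\bf M}|f(Re^{i\theta},t)|^{2m}\leq(Cm)^mS^{2m}(R)$. The combinatorial argument you invoke --- that pair-type contractions dominate --- is not available for a general MS: the hypothesis annihilates only moments of products of \emph{distinct} variables, so repeated-index moments such as ${\bf M}(X_1^2X_2X_3)$ need not vanish, and there are far too many of them to bound crudely by $1$. What is true, and what underlies Lemma 3.1 (quoted from \cite{6}), is proved differently: from ${\bf M}\prod_n(1+\lambda c_nX_n)=1$ and $1+u\geq e^{u-u^2}$ for $|u|\leq\frac12$ one obtains Salem--Zygmund type exponential bounds, and a net argument in $\psi$ then controls the maximum over the whole circle at radius $r$ itself with probability $1-N^{-\beta}$. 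Replacing your Chebyshev-plus-Cauchy-kernel step by that lemma would both repair this point and remove the enlargement from $r$ to $R$ (one of the two sources of the $\mu_f(R)$ versus $\mu_f(r)$ mismatch); the main gap described above, however, would still remain.
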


Sharpness of inequality (\ref{5}) follows from such a statement.

\begin{Theorem}[\cite{10}]
  \sl
  Let $X\in$ be arbitrary sequence of random variables such that $|X_n|\geq1$ for almost all
  $t\in[0;1]$.
  Then there exist random analytic function $f(z,t)$ of form (\ref{4}) and a constants $C>0,\ 0<r_0<1$ such that almost surely in $H(f,X)$
  for $r\in(r_0,1)$ we have
  \begin{equation}\label{6}
    M_f(r,t)> C\mu_f(r)\Biggl(\frac{1}{(1-r)^2}\cdot
    \ln\frac{\mu_f(r)}{1-r}\Biggl)^{1/4},\ r\to1-0.
  \end{equation}
 \end{Theorem}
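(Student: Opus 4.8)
The plan is to exhibit a single explicit choice of coefficients for which the lower bound (\ref{6}) becomes essentially deterministic, the randomness entering only through the harmless hypothesis $|X_n|\geq1$. Guided by the extremal series in (\ref{star}), I would fix $\varepsilon\in(0,1)$, take $a_n=\exp\{n^\varepsilon\}$ and set $f(z,t)=\sum_{n=0}^{+\infty}\exp\{n^\varepsilon\}X_n(t)z^n$. Since $|a_n|^{1/n}=\exp\{n^{\varepsilon-1}\}\to1$, the underlying series $\sum a_nz^n$ has radius of convergence $1$, and $\mu_f(r)=\max_n a_nr^n$ is the (deterministic) maximal term that appears on the right-hand side of (\ref{6}).

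The key idea is to bypass the unknown phases of $X_n$ by passing to the $L^2$-norm on circles. For each $n$ we have $|X_n(t)|\geq1$ for a.e.\ $t$, so by a countable intersection $|X_n(t)|\geq1$ holds simultaneously for all $n$ for a.e.\ $t$; for such $t$ the mean-value (Parseval) inequality gives
\[
M_f^2(r,t)\geq\frac{1}{2\pi}\int_0^{2\pi}|f(re^{i\theta},t)|^2\,d\theta=\sum_{n=0}^{+\infty}a_n^2|X_n(t)|^2r^{2n}\geq\sum_{n=0}^{+\infty}a_n^2r^{2n}=:S(r).
\]
Because only $|X_n|\geq1$ is used, this bound carries no further probabilistic content, so the conclusion is automatically ``almost surely in $H(f,X)$''. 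It remains to prove $S(r)\geq C^2\mu_f^2(r)\cdot\frac{1}{1-r}(\ln\frac{\mu_f(r)}{1-r})^{1/2}$ for $r$ near $1$, which upon taking square roots is precisely (\ref{6}).

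To bound $S(r)$ from below I would count the terms comparable to the maximal term. Writing $\psi(n)=\ln(a_n^2r^{2n})=2n^\varepsilon+2n\ln r$ and letting $\nu=\nu(r)$ be its continuous maximizer, the stationarity condition $\varepsilon\nu^{\varepsilon-1}=\ln(1/r)$ yields $\ln\mu_f(r)=(1-\varepsilon)\nu^\varepsilon(1+o(1))$, while $\psi''(\nu)=-2\varepsilon(1-\varepsilon)\nu^{\varepsilon-2}$. Restricting the sum to indices with $a_n^2r^{2n}\geq\frac12\mu_f^2(r)$ and invoking the quadratic behaviour $\psi(n)\approx\psi(\nu)-\frac12|\psi''(\nu)|(n-\nu)^2$, one finds at least $\asymp|\psi''(\nu)|^{-1/2}$ such integers, so that $S(r)\geq\frac12\mu_f^2(r)\cdot c\,|\psi''(\nu)|^{-1/2}$. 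Substituting the asymptotics above, together with $1-r\sim\ln(1/r)$ and $\ln\frac{\mu_f(r)}{1-r}\sim\ln\mu_f(r)$ (as $\ln\mu_f(r)$ grows like a positive power of $1/(1-r)$), I would check that $|\psi''(\nu)|^{-1/2}$ and $\frac{1}{1-r}(\ln\frac{\mu_f(r)}{1-r})^{1/2}$ agree up to a factor depending only on $\varepsilon$; this produces the required $C$ and $r_0$.

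The main obstacle is exactly this last asymptotic matching: one must control the Taylor remainder of $\psi$ over a window of width $\asymp|\psi''(\nu)|^{-1/2}$ about $\nu$ (verifying that the cubic term is negligible there and that the window contains the claimed number of integers), and then translate $|\psi''(\nu)|^{-1/2}$ back into the intrinsic quantities $1-r$ and $\ln\frac{\mu_f(r)}{1-r}$ uniformly as $r\to1-0$. Everything else — the choice of $a_n$, the Parseval lower bound, and the reduction to a deterministic estimate — is routine, and it is precisely this $L^2$-reduction that will be reused in the proof of Theorem~2.4.
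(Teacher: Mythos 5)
Your proposal is correct in outline and rests on the same central device as the paper's proof: the Parseval reduction $M_f^2(r,t)\geq\frac{1}{2\pi}\int_0^{2\pi}|f(re^{i\theta},t)|^2d\theta=\sum_n a_n^2|X_n(t)|^2r^{2n}\geq\sum_n a_n^2r^{2n}$, which converts the hypothesis $|X_n|\geq 1$ into a purely deterministic lower bound. Where you genuinely diverge is in how that deterministic sum is estimated. The paper chooses $a_n=e^{\sqrt{n}/2}$ precisely so that $\sum_n a_n^2r^{2n}=g(r^2)=M_g(r^2)$, where $g(z)=\sum_n e^{\sqrt{n}}z^n$ is the extremal function of (\ref{star}) with $\varepsilon=1/2$, and $\mu_g(r^2)=\mu_f^2(r)$; the already-known sharpness result (\ref{star}) then gives the required bound in two lines, after replacing $1-r^2$ by $2(1-r)$. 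You instead take $a_n=e^{n^\varepsilon}$, whose square $e^{2n^\varepsilon}$ is no longer of the form covered by (\ref{star}), so you are forced to re-derive the sharpness estimate from scratch by a Laplace-method count of the central terms. Your asymptotics do check out: with $\varepsilon\nu^{\varepsilon-1}=\ln(1/r)$ one has $|\psi''(\nu)|^{-1/2}\asymp\nu^{1-\varepsilon/2}$, while $\frac{1}{1-r}\ln^{1/2}\frac{\mu_f(r)}{1-r}\asymp\nu^{1-\varepsilon}\cdot\nu^{\varepsilon/2}$, and the cubic Taylor term over a window of width $\nu^{1-\varepsilon/2}$ is $O(\nu^{-\varepsilon/2})$, hence negligible, so your plan is completable. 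But the step you flag as ``the main obstacle'' is exactly the content of Suleymanov's theorem that the paper leverages rather than re-proves; your route buys self-containedness at the cost of redoing that analysis, while the paper's buys brevity by a clever normalization of the coefficients. Had you taken $a_n=e^{n^\varepsilon/2}$ (in particular $\varepsilon=1/2$), your squared series would be literally the function of (\ref{star}) evaluated at $r^2$, and your argument would collapse to the paper's few-line proof.
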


\section{Wiman's type inequality for analytic functions in the polydisc}
In \cite{8, 9, 11} one can find Wiman's type inequality for entire functions of
several complex variables.
Also in \cite{3} sharp Wiman's inequality was proved for analytic functions represented by the power series
\begin{equation*}
 f(z)=f(z_1,\ldots,z_p)=\sum_{\|n\|=0}^{+\infty}a_nz^n
 \end{equation*}
 with {the} domain
of convergence $\mathbb D^p=\{z\in\mathbb{C}^p\colon |z_j|<1,\ j\in\{1,\ldots,p\}\},$
 where $z^n=z_1^{n_1}\ldots z_p^{n_p},\ p\in\mathbb{N},\ p\geq2, \ n=(n_1,\ldots,n_p)\in \mathbb{Z}_+^p,\ \|n\|=\sum_{j=1}^pn_j.$

By $\mathcal{A}^p$ we denote the class of such analytic functions.

For $r=(r_1,r_2,\ldots, r_p)\in[0,1)^p$ and a function $f\in\mathcal{A}^p$ we denote
\begin{gather*}
\triangle_r=\{t\in[0,1)^p\colon t_j\geq r_j,\ j\in\{1,\ldots, p\}\},\\
M_f(r)=\max \{|f(z)|\colon  |z_j|\leq r_j,\ j\in\{1,\ldots, p\}\},\\
\mu_f(r)=\max\{|a_n|r^n\colon  n\in{\Bbb{Z}}_+^p\},\ \
\mathfrak{M}_f(r)=\sum_{\|n\|=0}^{+\infty}|a_n|r^n.
\end{gather*}

 We say that $E\subset[0,1)^p$ is a {\it set of asymptotically finite logarithmic measure on $[0,1)^p,$}
 if there exists $r_0\in[0,1)^p$ such that
 $$
 \nu_{\rm ln}(E\cap\triangle_{r_0}){:=}\idotsint\limits_{E\cap\triangle_{r_0}}\prod_{i=1}^p\frac{dr_i}{1-r_i}<+\infty,
 $$
i.e. the set $E\cap\triangle_{r_0}$  is a set of finite logarithmic measure on $[0,1)^p.$
We denote such class of the sets by $\Upsilon.$

For $f\in \mathcal{A}^p$ in \cite{3} was proved such a statement.
\begin{Theorem}\label{t1}
  \sl
  Let $f\in \mathcal{A}^p$. For every $\delta>0$
   there exists a set $E=E(f,\delta)\subset[0,1)^p, E\in\Upsilon$
  such that for all $r\in[0,1)^p\backslash E$ we have
  \begin{equation}\label{3}
    M_f(r)\leq\mathfrak{M}_f(r)\leq\mu_f(r)\Biggl(\prod_{j=1}^p\frac{1}{1-r_j}\cdot
    \ln^{p/2}\Biggl\{\mu_f(r)\prod_{j=1}^p\frac{1}{1-r_j}\Biggl\}\Biggl)^{1+\delta}.
  \end{equation}
 \end{Theorem}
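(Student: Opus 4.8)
The plan is to prove the two inequalities in (\ref{3}) separately, with the real work concentrated in the second one. The first inequality $M_f(r)\leq\mathfrak{M}_f(r)$ is immediate from the triangle inequality applied to the power series: on the polydisc torus $|z_j|=r_j$ we have $|f(z)|\leq\sum_{\|n\|=0}^{+\infty}|a_n|\,|z^n|=\sum_{\|n\|=0}^{+\infty}|a_n|r^n=\mathfrak{M}_f(r)$, and taking the maximum over the closed polydisc preserves this. So the entire difficulty lies in bounding $\mathfrak{M}_f(r)$ by the claimed expression involving $\mu_f(r)$ outside an exceptional set $E\in\Upsilon$.

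For the second inequality my approach would be to reduce the $p$-dimensional sum to a comparison with the maximal term by a covering/counting argument, exploiting the multiplicative product structure of the polydisc. First I would normalize by writing $\mathfrak{M}_f(r)=\mu_f(r)\sum_{n}\frac{|a_n|r^n}{\mu_f(r)}$, so each summand is $\leq 1$, and the task becomes estimating the ``number of essentially contributing terms.'' The natural device is to introduce, for each coordinate $j$, a variable $x_j=\frac{1}{1-r_j}$ (so $x_j\to+\infty$ as $r_j\to 1-0$) and to split the index set according to how $r^n$ compares to $\mu_f(r)$. I would estimate the tail sum $\sum_n |a_n|r^n$ by grouping indices into dyadic-type blocks and bounding each block by $\mu_f(r)$ times the cardinality of the block of indices $n$ with $|a_n|r^n$ within a fixed factor of $\mu_f(r)$; controlling this cardinality by $\bigl(\ln\frac{\mu_f(r)}{\prod(1-r_j)}\bigr)^{p/2}$ type factors is the crux. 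In the one-dimensional case this counting is exactly what produces the $\ln^{1/2}$ factor in (\ref{2}); here each of the $p$ coordinates contributes an independent $\ln^{1/2}$, which multiplies to $\ln^{p/2}$, and the product $\prod_j\frac{1}{1-r_j}$ arises from integrating the Gaussian-type tails in each coordinate.

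The main obstacle will be establishing the exceptional set $E$ has asymptotically finite logarithmic measure, i.e. $E\in\Upsilon$. The standard one-variable technique is a Borel–Nevanlinna type lemma: if a monotone function $\Phi(r)$ (here essentially $\ln\mu_f(r)$ or $\ln\mathfrak{M}_f(r)$) satisfies a differential-type growth bound, then the set where its increment exceeds a prescribed threshold has finite logarithmic measure. In the polydisc I would need a several-variable version of this lemma controlling the partial derivatives $\frac{\partial}{\partial r_j}\ln\mathfrak{M}_f(r)$ (or the discrete analogue) simultaneously in all $p$ directions, then take a union of the $p$ coordinate-wise exceptional sets. The delicate point is that the excluded set must be handled with respect to the product logarithmic measure $\prod_j\frac{dr_j}{1-r_j}$ over the shifted orthant $\triangle_{r_0}$, rather than a single-variable integral, so I would carefully arrange the Fubini-type argument so that fixing all but one coordinate reduces each exclusion to the one-dimensional finite-logarithmic-measure estimate, and then verify these combine into a set in $\Upsilon$.

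A subtlety to watch throughout is the interplay between $M_f(r)$, $\mathfrak{M}_f(r)$, and $\mu_f(r)$: since all three are monotone nondecreasing in each $r_j$, the Borel–Nevanlinna machinery applies coordinatewise, and the uniform boundedness of each summand after normalization by $\mu_f(r)$ keeps the counting argument clean. I would expect the final $(1+\delta)$ exponent to come, as in the classical proof, from absorbing a $\ln\ln$ correction term into the $\delta$, so the choice of thresholds in the exceptional-set construction should be calibrated to produce a convergent product over the dyadic blocks while leaving the stated power of the logarithm.
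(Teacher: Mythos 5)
Your plan correctly identifies the overall architecture (the first inequality $M_f(r)\leq\mathfrak{M}_f(r)$ is indeed trivial; the work is in comparing $\mathfrak{M}_f(r)$ with $\mu_f(r)$ outside a set in $\Upsilon$), but what you have written defers exactly the steps that carry all the difficulty, and the way you propose to execute them would not give the stated exponent. You say that bounding the number of indices $n$ with $|a_n|r^n$ comparable to $\mu_f(r)$ by $\ln^{p/2}$-type factors ``is the crux,'' and you justify the exponent $p/2$ by letting ``each of the $p$ coordinates contribute an independent $\ln^{1/2}$.'' For a general $f\in\mathcal{A}^p$ the coefficients $a_n$ do not factor as a product over coordinates, so the set of near-maximal indices is not a product set and the coordinates cannot be decoupled this way. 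More concretely: the natural execution of your head--tail scheme -- split at $n_j\le N_j$, bound the head by $\prod_j(N_j+1)\,\mu_f(r)$, and kill the tail using a bound on $\frac{\partial}{\partial r_s}\ln\mathfrak{M}_f(r)$ (this is exactly Lemma 3.2, which the present paper imports from \cite{3}) -- forces $N_j\asymp(1-r_j)^{-1}(\ln\mathfrak{M}_f(r))^{1+\delta}$, and therefore yields the exponent $p(1+\delta)$ on the logarithm, not $p(1+\delta)/2$; Cauchy--Schwarz on the head gives the same bound back. The square-root saving requires second-order information about the power-series distribution $p_n=|a_n|r^n/\mathfrak{M}_f(r)$ on $\mathbb{Z}_+^p$: for instance Chebyshev's inequality concentrating its mass in a box whose side lengths are the standard deviations $\bigl(r_j\frac{\partial}{\partial r_j}\bigr)^2\ln\mathfrak{M}_f(r)$ raised to the power $1/2$, each of which must itself be controlled outside an exceptional set by iterating the derivative lemma. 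Nothing in your sketch produces this variance control, so as written the argument can only reach the non-sharp power $p$ of the logarithm.

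The second gap is the exceptional set. Your plan is a union of $p$ coordinate-wise exceptional sets obtained from a one-variable Borel--Nevanlinna lemma, combined by Fubini. This does not by itself give $E\in\Upsilon$: even if for each fixed $(r_2,\dots,r_p)$ the exceptional slice in $r_1$ has finite logarithmic measure, the integral $\idotsint_{E\cap\triangle_{r_0}}\prod_{j}\frac{dr_j}{1-r_j}$ can still diverge, because the remaining coordinates carry infinite logarithmic measure; one needs the slice measures to decay in the other variables, uniformly enough to be integrable against $\prod_{j\neq s}\frac{dr_j}{1-r_j}$. This is precisely why Lemma 3.2 is stated with the sacrificial factors $\prod_{j\neq s}\bigl(\frac{1}{1-r_j}\bigr)^{\delta}$ on the right-hand side: that loss is what buys summability of the exceptional set in the product logarithmic measure. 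Finally, note for calibration that this paper contains no proof of Theorem \ref{t1} at all -- it is quoted, together with Lemma 3.2, from \cite{3} -- so the fair benchmark is the method those statements encode; your proposal names the right ingredients (a several-variable derivative lemma plus a concentration/counting step) but supplies neither of the two estimates in which the actual proof consists.
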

 Also in \cite{3} was proved that exponent $1+\delta$
 in inequality (\ref{3}) cannot be replaced by a number
 smaller than 1. It follows from such a theorem.
  \begin{Theorem}
  \sl
  There exist a function $f\in \mathcal{A}^p,$ a constant $C>0$
  and a set $E\subset[0,1)^p, E\not\in\Upsilon$
  such that for all $r\in E$ the inequality
  \begin{equation*}
    M_f(r)\geq C\mu_f(r)\prod_{j=1}^p\frac{1}{1-r_j}\cdot
    \ln^{p/2}\Biggl\{\mu_f(r)\prod_{j=1}^p\frac{1}{1-r_j}\Biggl\}
  \end{equation*}
  holds.
 \end{Theorem}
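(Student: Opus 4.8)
The plan is to build the extremal function as a product of one‑dimensional extremizers and then to single out a near‑diagonal set $E$ on which this product structure reproduces the right‑hand side of (\ref{3}). Concretely I would take
\[
f(z)=\prod_{j=1}^{p}g(z_j),\qquad g(w)=\sum_{n=1}^{+\infty}\exp\{n^{\varepsilon}\}w^{n},\quad \varepsilon\in(0,1),
\]
the one–dimensional function from (\ref{star}). Since $g$ has radius of convergence $1$, one checks that $f\in\mathcal{A}^p$ with domain of convergence $\mathbb D^p$. Because $|f(z)|=\prod_j|g(z_j)|$ and the variables separate over the product domain, while the coefficient of $z^n$ is the product of the $n_j$‑th coefficients of $g$, both principal quantities factor:
\[
M_f(r)=\prod_{j=1}^{p}M_g(r_j),\qquad \mu_f(r)=\prod_{j=1}^{p}\mu_g(r_j).
\]

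Next I would apply the one–dimensional sharpness (\ref{star}) coordinatewise. Denoting by $C_1$ the constant there, the $\varliminf$ bound furnishes $r_0<1$ with $M_g(\rho)\geq \tfrac{C_1}{2}\,\tfrac{\mu_g(\rho)}{1-\rho}\ln^{1/2}\tfrac{\mu_g(\rho)}{1-\rho}$ for all $\rho\in(r_0,1)$. Multiplying over $j$ gives, for every $r\in\triangle_{r_0}$,
\[
M_f(r)\geq\Bigl(\tfrac{C_1}{2}\Bigr)^{p}\mu_f(r)\prod_{j=1}^{p}\frac{1}{1-r_j}\cdot\prod_{j=1}^{p}L_j^{1/2},\qquad L_j:=\ln\frac{\mu_g(r_j)}{1-r_j}.
\]
Using $\ln\mu_f(r)=\sum_j\ln\mu_g(r_j)$, the target factor in (\ref{3}) is $\ln^{p/2}\{\mu_f(r)\prod_j(1-r_j)^{-1}\}=(\sum_j L_j)^{p/2}$, so everything reduces to comparing $(\prod_j L_j)^{1/2}$ with $(\sum_j L_j)^{p/2}$.

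Here lies the only genuine obstacle: by AM--GM one has $(\prod_j L_j)^{1/2}\leq p^{-p/2}(\sum_j L_j)^{p/2}$, which runs the \emph{wrong} way, with equality only when all $L_j$ coincide. Hence the product function cannot realize the bound on all of $\triangle_{r_0}$, and one must restrict to a set where the $L_j$ are mutually comparable. I would therefore set, with $s_j:=\ln\frac{1}{1-r_j}$,
\[
E=\bigl\{r\in\triangle_{r_0}\colon\ |s_i-s_j|\leq 1\ \text{ for all }i,j\bigr\}.
\]
Using the asymptotics $\ln\mu_g(r)\asymp(1-r)^{-\varepsilon/(1-\varepsilon)}=\exp\{\tfrac{\varepsilon}{1-\varepsilon}\,s\}$, which dominates $s$ as $r\to1$, the condition $|s_i-s_j|\leq1$ forces $L_i/L_j$ into a fixed bounded interval $[K^{-1},K]$ on $E$ (after enlarging $r_0$). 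For such comparable positive numbers the reverse estimate $(\prod_j L_j)^{1/2}\geq c_p(\sum_j L_j)^{p/2}$ holds with $c_p=K^{-(p-1)/2}p^{-p/2}>0$, and combining it with the displayed lower bound yields the asserted inequality on $E$ with $C=(C_1/2)^p c_p$.

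Finally I would verify $E\notin\Upsilon$. In the coordinates $s_j$ one has $\prod_j\frac{dr_j}{1-r_j}=\prod_j ds_j$, so logarithmic measure becomes Lebesgue measure, and for any $r_0'$ the set $E\cap\triangle_{r_0'}$ corresponds to the diagonal tube $\{s\in[s_0',\infty)^p\colon |s_i-s_j|\leq1\}$. This tube has a transverse cross–section of positive, bounded $(p-1)$‑dimensional measure and infinite extent along the direction $(1,\ldots,1)$, hence infinite $p$‑dimensional volume; therefore $\nu_{\rm ln}(E\cap\triangle_{r_0'})=+\infty$ for every $r_0'$, i.e. $E\notin\Upsilon$, which completes the argument. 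The step requiring the most care is the comparability/measure balance: the naive product is extremal only along the diagonal, and the whole point is to thicken the diagonal just enough to keep the $L_j$ comparable while still escaping $\Upsilon$.
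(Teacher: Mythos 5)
Your proposal is correct. Note first that the paper does not itself prove this statement (its Theorem 2.2) but cites it from \cite{3}; the closest argument actually carried out in the paper is the proof of Theorem 2.4 in Subsection 3.3, and your proposal follows the same overall strategy as that proof: take a product $f(z)=\prod_{j}g(z_j)$ of one-dimensional extremals, use the factorizations $M_f(r)=\prod_j M_g(r_j)$ and $\mu_f(r)=\prod_j\mu_g(r_j)$, apply the one-dimensional sharpness (\ref{star}) coordinatewise, and then restrict to a neighbourhood of the diagonal on which the logarithmic factors $L_j$ are pairwise comparable, so that $\bigl(\prod_j L_j\bigr)^{1/2}\geq c_p\bigl(\sum_j L_j\bigr)^{p/2}$, finally checking that this neighbourhood has infinite logarithmic measure. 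Where you genuinely differ is in how the diagonal neighbourhood is parametrized, and this redistributes the technical work. The paper fixes $r_1$ and lets $r_i$, $i\geq 2$, range over $\bigl(g^{-1}(g(r_1)/3),\,g^{-1}(3g(r_1))\bigr)$, where $g(t)=\ln\frac{\mu_{f_0}(t)}{1-t}$; comparability of the quantities $g(r_i)$ (within a factor $9$) is then automatic from the definition of the set, but proving that the set has infinite logarithmic measure requires the auxiliary inequality (\ref{2s}), $g^{-1}(3g(t))-g^{-1}\bigl(\frac{g(t)}{3}\bigr)>1-g^{-1}(3g(t))$, whose proof in turn rests on the asymptotics $g(t)\sim\frac{1}{16(1-t)}$. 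You instead take the tube $|s_i-s_j|\leq 1$ in the logarithmic coordinates $s_j=\ln\frac{1}{1-r_j}$; there the measure computation is immediate (logarithmic measure becomes Lebesgue measure of a diagonal tube of infinite extent), but the comparability of the $L_j$ is now the step that consumes the coefficient asymptotics $\ln\mu_g(r)\asymp(1-r)^{-\varepsilon/(1-\varepsilon)}$. So the two arguments use the same asymptotic input, just at different places: yours is cleaner on the measure side, the paper's is cleaner on the comparability side, and both yield the theorem with a constant of the same product form.
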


 The aim of this paper is to prove the sharp Wiman's inequality
 for random analytic functions in the polydisc.
 We will prove, that almost surely the exponent $1+\delta$ in inequality (\ref{3}) one can replace
 by $\frac12+\delta,$ and this exponent cannot be placed by a number smaller than $\frac12.$

 Let $Z=(Z_{n}(t))$ be a complex sequence of random variables $Z_{n}(t)=X_{n}(t)+iY_{n}(t)$ such that both $X=(X_{n}(t))$ and $Y=(Y_{n}(t))$ are {real} MS and $K(f,Z)$ the class of random analytic functions of the form
 $$f(z,t)=\sum^{+\infty}_{\|n\|=0}a_{n}Z_{n}(t)z_1^{n_1}\ldots z_p^{n_p}.$$

For such a functions we prove following statement.

\begin{Theorem}
  \sl
  Let $f\in \mathcal{A}^p$, $Z$ be a MS uniformly bounded by the number 1, $\delta>0$.
  Then almost surely in $K(f,Z)$
   there exists a set $E=E(f,t,\delta), E\in\Upsilon$
  such that for all $r\in[0,1)^p\backslash E$ we have
  \begin{equation}\label{7}
    M_f(r,t)\leq \mu_f(r)\Biggl(\prod_{j=1}^p\frac{1}{\sqrt{1-r_j}}\cdot
    \ln^{p/4}\Biggl\{\mu_f(r)\prod_{j=1}^p\frac{1}{1-r_j}\Biggl\}\Biggl)^{1+\delta}.
  \end{equation}
 \end{Theorem}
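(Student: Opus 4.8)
The plan is to bound $M_f(r,t)$ almost surely by the $\ell^2$-quantity $S(r):=\bigl(\sum_n|a_n|^2r^{2n}\bigr)^{1/2}$ and then to convert $S(r)$ into $\mu_f(r)$ by the already proved deterministic estimate~(\ref{3}). The entire square-root gain over~(\ref{3}) is contained in the elementary inequality $S(r)^2=\sum_n(|a_n|r^n)^2\le\mu_f(r)\sum_n|a_n|r^n=\mu_f(r)\mathfrak{M}_f(r)$, which together with Theorem~\ref{t1} gives $S(r)\le\mu_f(r)\bigl(\prod_j(1-r_j)^{-1/2}\ln^{p/4}\{\cdots\}\bigr)^{1+\delta/2}$, i.e. essentially the right-hand side of~(\ref{7}). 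Thus the only probabilistic task is the comparison $M_f(r,t)\lesssim S(r)$ up to a harmless power of a logarithm, and this I would obtain by the moment method already behind the disc case~(\ref{5}).

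First I would fix $t$ and pass from the maximum to a mean. For $r_j<R_j<1$ and an integer $q\ge1$, applying the one–variable estimate $M_g(\rho)\le(1-(\rho/R)^2)^{-1/(2q)}\|g\|_{L^{2q}(|z|=R)}$ to $f(\cdot,t)^q$ in each variable successively yields
$$M_f(r,t)^{2q}\le\Bigl(\prod_{j=1}^p\frac{1}{1-(r_j/R_j)^2}\Bigr)\frac{1}{(2\pi)^p}\idotsint\limits_{[0,2\pi]^p}|f(R_1e^{i\theta_1},\dots,R_pe^{i\theta_p},t)|^{2q}\,d\theta.$$
Taking ${\bf M}$ in $t$ and using Tonelli, for each fixed $\theta$ the value $f(R_1e^{i\theta_1},\dots,R_pe^{i\theta_p},t)$ is a random linear form $\sum_n b_nZ_n(t)$ with $|b_n|=|a_n|R^n$. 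The key probabilistic input is a Khinchin-type inequality for a multiplicative system uniformly bounded by $1$, namely ${\bf M}\bigl|\sum_n b_nZ_n\bigr|^{2q}\le C_q\bigl(\sum_n|b_n|^2\bigr)^q$ with $C_q\le(cq)^{\beta q}$; since its right side is independent of $\theta$, integration gives
$${\bf M}\Bigl[\frac{1}{(2\pi)^p}\idotsint\limits_{[0,2\pi]^p}|f(R_1e^{i\theta_1},\dots,R_pe^{i\theta_p},t)|^{2q}\,d\theta\Bigr]\le C_q\,S(R)^{2q}.$$

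Next I would discretise. On a geometric grid $\{R^{(k)}\}\subset[0,1)^p$ in the variables $1-R_j$, attaching to each node the exponent $q=q(R^{(k)})\asymp\ln\bigl(\mu_f(R^{(k)})\prod_j(1-R^{(k)}_j)^{-1}\bigr)\to\infty$ and a weight $H_k$ with $\sum_kH_k^{-1}<\infty$, $\ln H_k\asymp\sum_j\ln(1-R^{(k)}_j)^{-1}$, Chebyshev's inequality applied to Step~2 and Borel–Cantelli give, almost surely,
$$\frac{1}{(2\pi)^p}\idotsint\limits_{[0,2\pi]^p}|f(R^{(k)}e^{i\theta},t)|^{2q}\,d\theta\le H_k\,C_q\,S(R^{(k)})^{2q}\qquad(k\ge k_0(t)).$$
Because $q\asymp\ln\{\cdots\}$, the factors $H_k^{1/(2q)}$ and $\prod_j(1-(r_j/R_j)^2)^{-1/(2q)}$ from Step~1 stay bounded once $1-R_j\asymp1-r_j$, while $C_q^{1/(2q)}$ contributes only a fixed power $\ln^{\beta/2}\{\cdots\}$. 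Interpolating from the grid to an arbitrary $r$ (monotonicity of $M_f(\cdot,t),\mu_f,S$ in each coordinate) and controlling $\mu_f(R)$, hence $S(R)$, against the radius-$r$ data off a further set of finite logarithmic measure (the classical Wiman–Valiron device after inflation), I would obtain, almost surely and for all $r\in\triangle_{r_0}$ outside a set $E_0$ with $\nu_{\rm ln}(E_0\cap\triangle_{r_0})<\infty$,
$$M_f(r,t)\le C\,\ln^{\beta/2}\Bigl\{\mu_f(r)\prod_{j=1}^p\frac{1}{1-r_j}\Bigr\}\,S(r).$$

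Finally, from $S(r)^2\le\mu_f(r)\mathfrak{M}_f(r)$ and the deterministic estimate~(\ref{3}) (valid off $E_1\in\Upsilon$) one has
$$S(r)\le\mu_f(r)\Bigl(\prod_{j=1}^p\frac{1}{\sqrt{1-r_j}}\cdot\ln^{p/4}\Bigl\{\mu_f(r)\prod_{j=1}^p\frac{1}{1-r_j}\Bigr\}\Bigr)^{1+\delta/2},$$
and substituting this into the comparison of Step~3 the stray factor $C\ln^{\beta/2}\{\cdots\}$ is, for $r$ close enough to the boundary, dominated by $\bigl(\prod_j(1-r_j)^{-1/2}\bigr)^{\delta/2}$, since a positive power of $1-r_j$ beats any power of the logarithm as $r_j\to1-0$; the product then does not exceed the right-hand side of~(\ref{7}) and $E:=E_0\cup E_1\in\Upsilon$. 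I expect the difficulty to be twofold. The structural input is the moment inequality of Step~2 for the complex system $Z=X+iY$: expanding ${\bf M}|\sum_nb_nZ_n|^{2q}$ one must use the relations ${\bf M}(Z_{i_1}\cdots Z_{i_k})=0$ and $|Z_n|\le1$ to show that only paired products survive and that their number is at most $(cq)^{\beta q}$; this is essentially one–dimensional and importable from the proof of~(\ref{5}). The genuinely technical part is Step~3: choosing the grid, the node-dependent $q$, the inflation radii $R$, and the weights $H_k$ so that the inflation cost and the passage $\mu_f(R)\rightsquigarrow\mu_f(r)$ produce only an absorbable power of the logarithm while the Borel–Cantelli exceptional set retains asymptotically finite logarithmic measure in the polydisc, i.e. lies in $\Upsilon$.
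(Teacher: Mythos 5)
Your skeleton---first $M_f(r,t)\lesssim(\text{log factor})\cdot S$ almost surely, then $S^2\le\mu_f(r)\mathfrak{M}_f(r)$ combined with the deterministic inequality (\ref{3})---is the same as the paper's (your Step 4 is literally the paper's estimate (\ref{13}) for $S_{N_{kl}}$), but Step 3 contains a genuine gap that breaks the argument precisely on the functions the theorem is about. Your Steps 1--3 bound $M_f(r,t)$ by logarithms times $S(R)$ at an inflated radius with $1-R_j\asymp 1-r_j$, and you then assert that the passage back to radius-$r$ data costs ``only an absorbable power of the logarithm''. No such passage exists: $S(R)\ge\mu_f(R)$, and for admissible growth the inflation costs a fixed power of the maximal term at \emph{every} point, not merely off a small set. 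For the paper's own extremal function $f_0(\tau)=\sum_k e^{\sqrt k/2}\tau^k$ one has $\ln\mu_{f_0}(t)\sim\frac1{16(1-t)}$, so $1-R=(1-r)/2$ gives $\mu_{f_0}(R)\approx\mu_{f_0}(r)^2$; your Step 3 output is then $\gtrsim\mu_f(r)^2$, while the right side of (\ref{7}) is linear in $\mu_f(r)$. Since the failure occurs at every $r$, no Borel/Wiman--Valiron exceptional-set device can rescue constant-factor inflation. Nor can you shrink the inflation instead: taking $R_j-r_j\approx e^{-cq'}$ with $q'\asymp\ln\mu_f$ forces $q\gtrsim\ln\mu_f$ in Step 1, and then the stray factor $C_q^{1/(2q)}\approx\ln^{\beta/2}\bigl\{\mu_f(r)\prod_j\frac1{1-r_j}\bigr\}$ is itself non-absorbable --- which is also the flaw in your Step 4: ``a positive power of $1-r_j$ beats any power of the logarithm'' is true for $\ln\prod_j\frac1{1-r_j}$ but false for $\ln\mu_f(r)$, which may be of order $(1-r)^{-1}$. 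What fixes both problems simultaneously, and is the paper's actual mechanism, is truncation: cut the series at $N\approx\prod_j(1-r_j)^{-(2+3\delta)}\ln^{p/2+1+p\delta}\bigl\{\mu_f(r)\prod_j\frac1{1-r_j}\bigr\}$, kill the tail deterministically ($\le\mu_f(r)$, via the logarithmic-derivative Lemma 3.2), and estimate the remaining polynomial at the \emph{same} radius $r$ by the large-deviation Lemma 3.1; the stray factor is then only $\ln^{1/2}N\asymp\bigl(\sum_j\ln\frac1{1-r_j}+\ln\ln\mu_f\bigr)^{1/2}$, which genuinely is absorbable, and the paper's cells $G_{kl}$ --- indexed by the level of $\ln\mu_f$ as well as of $\sum_j\ln\frac1{1-r_j}$, so that $\mu_f$ varies by at most a factor $e$ on each cell --- are exactly the grid adapted to $\mu_f$ that your purely dyadic grid lacks.

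A secondary point: your proposed mechanism for Step 2 (expand ${\bf M}\bigl|\sum_n b_nZ_n\bigr|^{2q}$ and claim only paired products survive by the MS relations) is invalid, because the MS hypothesis annihilates only products of \emph{distinct first powers}; terms with repeated indices such as ${\bf M}(X_1^2X_2X_3)$ are untouched by it. The correct route, which underlies the paper's Lemma 3.1, is the chord estimate $e^{\lambda x}\le\cosh\lambda+x\sinh\lambda$ for $|x|\le1$: taking the product over $n$ and using multiplicativity to kill cross terms gives ${\bf M}\exp\bigl\{\lambda\sum_n b_nX_n\bigr\}\le\exp\bigl\{\frac{\lambda^2}{2}\sum_n b_n^2\bigr\}$, hence sub-Gaussian tails and the moment bound you want. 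But once you import that as a black box, you may as well import Lemma 3.1 itself (uniform over $\psi\in[0,2\pi]^p$, failure probability $N^{-\beta}$, same radius), which replaces your Steps 1--2 wholesale and avoids the inflation defect above; what then remains is the transfer of the cell-by-cell probabilistic bound to an almost-sure statement in $t$ uniform in $r$, which the paper carries out with the product measure $P_0=P\otimes\nu$, Borel--Cantelli, and the choice of near-extremal points in each $G_{kl}^*$.
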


We prove that no one of powers $1/2$ and $p/4$
in inequality (\ref{7})
we cannot replace by smaller number than $1/2$ and $p/4$ respectively.
It follows from such statement.

\begin{Theorem}
  \sl
  Let  $Z$ be a sequence of random variables such that $|Z_n|\geq1$ for almost all $t\in[0;1]$.
  Then
   there exist an analytic function $f\in \mathcal{A}^p$, a constant $C>0$ and a~set $E=E(f,t,\delta)\subset[0,1)^p,\ E\not\in\Upsilon$
  such that almost surely in $K(f,Z)$ for all $r\in E$  we get
  \begin{equation}\label{1s}
    M_f(r,t)\geq C\mu_f(r)\prod_{j=1}^p\frac{1}{\sqrt{1-r_j}}\cdot
    \ln^{p/4}\Biggl\{\mu_f(r)\prod_{j=1}^p\frac{1}{1-r_j}\Biggl\}.
  \end{equation}
 \end{Theorem}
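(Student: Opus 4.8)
The plan is to produce a single deterministic sequence of coefficients $(a_n)$ whose \emph{central set} is so large that the crude $L^2$-lower bound for the maximum modulus on the distinguished boundary torus already reproduces the right-hand side of (\ref{1s}); the hypothesis $|Z_n|\ge 1$ then makes this bound survive the randomisation, and ``almost surely'' enters only through the null set on which some $|Z_n(t)|<1$. Concretely, for every $t$ at which $|Z_n(t)|\ge 1$ for all $n$ (an event of probability $1$), the maximum principle in the polydisc together with Parseval's identity on $\{|z_j|=r_j\}$ gives
\begin{equation*}
M_f(r,t)^2\ \ge\ \frac{1}{(2\pi)^p}\idotsint\limits_{[0,2\pi]^p}\bigl|f(re^{i\theta},t)\bigr|^2\,d\theta
=\sum_{n}|a_n|^2|Z_n(t)|^2 r^{2n}\ \ge\ \sum_{n\in S(r)}|a_n|^2 r^{2n},
\end{equation*}
where $S(r)=\{n\in\mathbb{Z}_+^p:\ |a_n|r^{n}\ge\tfrac12\mu_f(r)\}$. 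Since each retained term is at least $\tfrac14\mu_f(r)^2$, this yields
\begin{equation*}
M_f(r,t)\ \ge\ \tfrac12\,\mu_f(r)\sqrt{N(r)},\qquad N(r):=\# S(r).
\end{equation*}
This is the entire probabilistic content; the loss of a square root compared with the deterministic sharpness theorem (where positivity of the coefficients lets one evaluate at $z=r$ and obtain $M_f(r)\gtrsim\mu_f(r)N(r)$) is exactly what converts the exponents $(1,p/2)$ into $(1/2,p/4)$.

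It remains to make $N(r)$ large. I would take the tensor product $a_n=\prod_{j=1}^p b_{n_j}$ of the one-dimensional extremal sequence $b_m=\exp\{m^{\varepsilon}\}$, $\varepsilon\in(0,1)$, underlying (\ref{star}), so that $\mu_f(r)=\prod_{j=1}^p\mu_g(r_j)$ with $g(w)=\sum_m b_m w^m$ (clearly $g\in\mathcal{A}^1$, hence $f\in\mathcal{A}^p$). A saddle-point analysis of $\mu_g(r_j)=\max_m b_m r_j^m$ shows that the one-variable central set $\widetilde S_g(r_j)=\{m:\ b_m r_j^m\ge(1/2)^{1/p}\mu_g(r_j)\}$ satisfies
\begin{equation*}
\#\widetilde S_g(r_j)\ \asymp\ \frac{1}{1-r_j}\,\ln^{1/2}\frac{\mu_g(r_j)}{1-r_j}
\end{equation*}
for all $r_j$ close to $1$, and not merely along a subsequence. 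Because the coefficients factor, $\prod_{j}\widetilde S_g(r_j)\subseteq S(r)$ and moreover $\sum_{n\in S(r)}|a_n|^2r^{2n}\ge c_p\,\mu_f(r)^2\prod_{j=1}^p\#\widetilde S_g(r_j)$.

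Combining the two steps and writing $x_j:=\ln\frac{\mu_g(r_j)}{1-r_j}$, together with $\ln\{\mu_f(r)\prod_j\frac{1}{1-r_j}\}=\sum_j x_j$, I obtain
\begin{equation*}
M_f(r,t)\ \gtrsim\ \mu_f(r)\prod_{j=1}^p\frac{1}{\sqrt{1-r_j}}\;\Bigl(\,\textstyle\prod_{j=1}^p x_j\Bigr)^{1/4},
\end{equation*}
whereas (\ref{1s}) carries the factor $(\sum_j x_j)^{p/4}$. These two expressions agree up to a constant precisely on the comparability cone $E=\{r\in[0,1)^p:\ \tfrac12\le\frac{1-r_i}{1-r_j}\le 2\ \text{for all}\ i,j\}$ near the vertex $(1,\dots,1)$, where all $x_j$ are comparable and hence $\prod_j x_j\asymp(\sum_j x_j)^p$. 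In the coordinates $u_i=-\ln(1-r_i)$ this cone is an infinite tube of bounded transverse width, so $\nu_{\rm ln}(E\cap\triangle_{r_0})=+\infty$ for every $r_0$, i.e. $E\notin\Upsilon$. On $E$ the displayed lower bound is exactly (\ref{1s}), it holds for every admissible $t$, hence almost surely in $K(f,Z)$; and since decreasing either the power $1/2$ or the power $p/4$ would make the right-hand side of (\ref{7}) eventually smaller than $M_f(r,t)$ along $E$ (even after absorbing $1+\delta$ for small $\delta$), both exponents are sharp.

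The main obstacle is the width estimate of the second step: the sharp two-sided control $\#\widetilde S_g(r_j)\asymp\frac{1}{1-r_j}\ln^{1/2}\frac{\mu_g(r_j)}{1-r_j}$, uniform as $r_j\to1^-$. This is the quantitative saddle-point computation that both creates the extra $\ln^{1/2}$ factor and fixes its exponent; once it is available, the tensorisation to the polydisc, the choice of the comparability cone, and the reduction to the deterministic $L^2$ inequality are all routine.
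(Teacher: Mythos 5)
Your proposal has the same skeleton as the paper's proof: a tensor product $f(z)=\prod_{j=1}^p f_0(z_j)$ of a one--dimensional extremal function; reduction of the probabilistic statement to a deterministic $L^2$ lower bound via Parseval's identity on the torus together with $|Z_n(t)|\ge 1$ (in both arguments this is the sole probabilistic ingredient, and it is exactly what turns the exponents $(1,p/2)$ into $(1/2,p/4)$); the same intermediate inequality carrying the factor $\bigl(\prod_{j=1}^p\ln\frac{\mu_{f_0}(r_j)}{1-r_j}\bigr)^{1/4}$ (this is the paper's inequality (\ref{3s})); and finally a set of infinite asymptotic logarithmic measure on which the quantities $x_j$ are pairwise comparable so that $\prod_j x_j\asymp(\sum_j x_j)^p$. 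You differ in the two supporting steps, once to your advantage and once to your disadvantage. The exceptional set: your cone $\{\frac12\le\frac{1-r_i}{1-r_j}\le 2\}$ is simpler than the paper's tube $E^*$ built from $g^{-1}(g(r_1)/3)$ and $g^{-1}(3g(r_1))$ (with $g(t)=\ln\frac{\mu_{f_0}(t)}{1-t}$), and in the coordinates $u_j=-\ln(1-r_j)$ your infinite-measure verification is immediate, whereas the paper has to prove the auxiliary inequality (\ref{2s}); note, however, that your comparability claim $x_i\asymp x_j$ on the cone is not free: it needs the asymptotics $\ln\mu_g(r)\asymp(1-r)^{-\varepsilon/(1-\varepsilon)}$, i.e. the same saddle-point information the paper extracts when it computes $g(t)\sim\frac{1}{16(1-t)}$.

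The one genuine gap is the step you yourself flag: the count of near-maximal indices $\#\widetilde S_g(r)\asymp\frac{1}{1-r}\ln^{1/2}\frac{\mu_g(r)}{1-r}$, on which your whole lower bound rests, is asserted but not proved. The claim is true, and only the lower bound on the count is actually needed: for $b_m=e^{m^{\varepsilon}}$ the exponent $h(m)=m^{\varepsilon}-m\ln\frac1r$ is maximal at $m_*\asymp(1-r)^{-1/(1-\varepsilon)}$, with $|h''(m_*)|\asymp m_*^{\varepsilon-2}$, so the set where $h$ is within an additive constant of its maximum is an interval of length $\asymp m_*^{1-\varepsilon/2}\asymp(1-r)^{-(2-\varepsilon)/(2(1-\varepsilon))}$, which indeed matches $\frac{1}{1-r}\ln^{1/2}\frac{\mu_g(r)}{1-r}$ since $\ln\mu_g(r)\asymp m_*^{\varepsilon}$. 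But this Laplace-type computation is precisely the quantitative content hidden inside (\ref{star}), so it must be written out; as it stands your argument quotes the conclusion of (\ref{star}) for the choice of coefficients while re-deriving its mechanism without proof. The paper sidesteps this entirely by a cheaper device you could adopt verbatim: choose the coefficients so that their squares are again extremal, namely $f_0(\tau)=\sum_k e^{\sqrt k/2}\tau^k$, so that the Parseval sum factors as $\prod_j\sum_k e^{\sqrt k}r_j^{2k}=\prod_j\mathfrak{M}_{g_0}(r_j^2)\ge\prod_j M_{g_0}(r_j^2)$ with $g_0(\tau)=\sum_k e^{\sqrt k}\tau^k$, and then quote the deterministic sharpness (\ref{star}) for $g_0$ together with $\mu_{g_0}(r_j^2)=\mu_{f_0}(r_j)^2$ and $\frac{1}{1-r_j^2}\ge\frac12\cdot\frac{1}{1-r_j}$. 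With that substitution (or with a written-out proof of your counting estimate), your argument closes.
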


\section{Proofs}
We give a proof of Theorem 1.2 for completeness.
\subsection{Proof of theorem 1.2.}

We consider
\begin{gather*}
g(z)=\sum_{n=1}^{+\infty}\exp\{\sqrt{n}\}z^n,\
f(z)=\sum_{n=1}^{+\infty}\exp\{\sqrt{n}/2\}z^n,\\
f(z,t)=\sum_{n=1}^{+\infty}X_n(t)\exp\{\sqrt{n}/2\}z^n.
\end{gather*}
Remark that for all $0<r<1$
$$
\mu_g(r^2)=\max\{e^{\sqrt{n}}r^{2n}\colon n\geq1\}=\max\{(e^{\sqrt{n}/2}r^{n})^{2}\colon n\geq1\}=(\mu_f(r))^2
$$
and using Parseval's equality we get
$$
M_g(r^2)\leq\sum_{n=1}^{+\infty}|X_n(t)|^2\exp\{\sqrt{n}\}r^{2n}=\frac{1}{2\pi}\int_{0}^{2\pi}|f(re^{i\theta},t)|^2d\theta\leq (M_f(r,t))^2.
$$
Therefore using (\ref{star}) we obtain
\begin{gather*}
(M_f(r,t))^2\geq M_g(r^2)\geq C\frac{\mu_g(r^2)}{1-r^2}\cdot
    \ln^{1/2}\frac{\mu_g(r^2)}{1-r^2}\geq \frac{C}2\frac{\mu_f^2(r)}{1-r}\cdot
    \ln^{1/2}\frac{\mu_f^2(r)}{1-r},\\
  M_f(r,t) \geq \sqrt{\frac{C}3}\frac{\mu_f(r)}{\sqrt{1-r}}\cdot
    \ln^{1/4}\frac{\mu_f(r)}{1-r}, \ r\to1-0.
\end{gather*}

\subsection{Proof of theorem 2.3.}

\begin{Lemma}[\cite{6}]{\sl Let $X=(X_{n}(t))$ be a MS
uniformly bounded by the number 1.
 Then for {each} $\beta>0$ there exists a constant
  $A_{\beta p}>0,$ which depends on $p$ and $\beta$ only such that for all
  $N\geq N_1(p)=\max\{p,4\pi\}$ and  $\{c_{n}\colon \|n\|\leq N\}\subset{\Bbb C}$
  we have
 \begin{gather}\label{6}
 P\Biggl\{t\colon\!\!\max\Biggl\{\Biggl|\sum_{\|n\|=0}^Nc_{n}X_{n}(t)e^{in_1\psi_1}\!\!\!\ldots
 e^{in_p\psi_p}\Biggl|\colon\!\psi\in[0,2\pi]^p\Biggl\}
 \geq A_{\beta p}S_N\ln^{\frac{1}{2}}N\Biggl\}\!\leq\!\frac{1}{N^\beta},
 \end{gather}
  where
  $S_N^2=\sum_{\|n\|=0}^N|c_{n}|^2.$}
\end{Lemma}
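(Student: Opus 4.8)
The plan is to prove this Salem--Zygmund type estimate in three stages: a pointwise tail bound on the torus coming from a high even moment, a passage from a single point to the supremum over $[0,2\pi]^p$ by discretisation, and a final optimisation of the moment order. Write $F(\psi,t)=\sum_{\|n\|\le N}c_nX_n(t)e^{i\langle n,\psi\rangle}$; this is a trigonometric polynomial of degree $\le N$ in each variable $\psi_1,\dots,\psi_p$, and for every fixed $\psi$ its coefficients $c_ne^{i\langle n,\psi\rangle}$ satisfy $\sum_{\|n\|\le N}|c_ne^{i\langle n,\psi\rangle}|^2=S_N^2$.

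The core input is a Khinchin-type moment inequality: for every $q\in\mathbb N$ and every fixed $\psi$,
\begin{equation*}
\mathbf{M}|F(\psi,t)|^{2q}\le (Cq)^q S_N^{2q}
\end{equation*}
with an absolute constant $C$. This is where the multiplicative structure is used. Expanding $|F|^{2q}=F^q\overline{F}^{\,q}$ and taking the expectation, the factors $e^{i\langle n,\psi\rangle}$ have modulus $1$ and the axiom $\mathbf{M}(X_{i_1}\cdots X_{i_k})=0$ for distinct indices eliminates the bulk of the monomials, leaving only the ``diagonal'' contributions in which the indices pair up; on these the uniform bound $|X_n|\le1$ lets each factor be replaced by $1$, and a count of the number of such pairings produces the factor $(Cq)^q$ against $S_N^{2q}$. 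I expect this step to be the main obstacle, since the precise constant must be teased out of the bare multiplicative axiom together with $|X_n|\le1$ rather than from genuine independence.

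Given the moment bound, I would discretise the torus. Choose a grid $G\subset[0,2\pi]^p$ of mesh $h=1/(pN)$, so that $\#G\le(2\pi pN)^p$. Bernstein's inequality for trigonometric polynomials gives $\|\partial_{\psi_j}F\|_\infty\le N\|F\|_\infty$, hence for a maximiser $\psi_0$ and its nearest grid point $\psi^{*}$ one has $|F(\psi_0)-F(\psi^{*})|\le p\cdot\frac{h}{2}\cdot N\|F\|_\infty=\frac12\|F\|_\infty$, so that $\max_\psi|F(\psi,t)|\le2\max_{\psi\in G}|F(\psi,t)|$. Combining this reduction with Chebyshev's inequality applied to the $2q$-th moment at each grid point and a union bound yields
\begin{equation*}
P\Bigl\{\max_\psi|F(\psi,t)|\ge 2\lambda\Bigr\}\le\#G\cdot\max_{\psi\in G}\frac{\mathbf{M}|F(\psi,t)|^{2q}}{\lambda^{2q}}\le(2\pi pN)^p\Bigl(\frac{Cq\,S_N^2}{\lambda^2}\Bigr)^{q}.
\end{equation*}

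Finally I would optimise in $q$ and $\lambda$. Putting $\lambda=AS_N\ln^{1/2}N$ turns the bracket into $(Cq/(A^2\ln N))^q$, and the choice $q=\lceil\ln N\rceil$ makes it comparable to $N^{\ln(C/A^2)}$; the whole right-hand side is then at most $(2\pi pN)^pN^{\ln(C/A^2)}$. Since $N\ge N_1(p)=\max\{p,4\pi\}$, one may take $A=A_{\beta p}$ large enough, depending only on $p$ and $\beta$, so that the negative power $N^{\ln(C/A^2)}$ dominates both the polynomial factor $(2\pi pN)^p$ and the target $N^{-\beta}$; this gives the claimed estimate with threshold $2\lambda=A_{\beta p}S_N\ln^{1/2}N$. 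The remaining details---the dependence of the grid constant on $p$ and the rounding in $q=\lceil\ln N\rceil$---are routine once the moment inequality is established.
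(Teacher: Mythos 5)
Your architecture (pointwise tail bound, Bernstein-net discretisation, union bound over the grid, optimisation in the moment order $q$) is the standard Salem--Zygmund scheme; the discretisation and optimisation stages are fine. But the proof collapses exactly at the step you yourself flagged as the main obstacle: the moment inequality $\mathbf{M}|F(\psi,t)|^{2q}\le (Cq)^q S_N^{2q}$. Your justification --- that the multiplicative axiom ``eliminates the bulk of the monomials, leaving only the diagonal contributions in which the indices pair up'' --- is false for an MS. The axiom annihilates only those monomials in which \emph{all} indices are distinct and each occurs to the first power; a monomial with one repeated index and many lone factors, such as $X_i^2X_{j_1}\cdots X_{j_{2q-2}}$, is not covered, and its expectation need not vanish. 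Concretely, with $\epsilon_1,\epsilon_2,\epsilon_3$ independent Rademacher variables, the system $X_1=\epsilon_1(1+\epsilon_2\epsilon_3)/2$, $X_2=\epsilon_2$, $X_3=\epsilon_3$ is an MS uniformly bounded by $1$, yet $\mathbf{M}(X_1^2X_2X_3)=\frac{1}{2}$. (For independent centered variables a lone factor kills the expectation --- that is what your pairing count silently uses; for an MS it does not.) If instead you bound all surviving monomials by absolute values, you get a quantity of order $q^2S_N^2\bigl(\sum_n|c_n|\bigr)^{2q-2}$, which can be as large as $q^2N^{q-1}S_N^{2q}$, and then Chebyshev with $q\sim\ln N$ gives nothing. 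So the core inequality is unproven, and no refinement of the expansion argument can extract it from the bare MS axiom.

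The device that actually replaces independence here (and is what the cited source \cite{6} and the related papers \cite{4}, \cite{11} rely on; note the present paper quotes the lemma from \cite{6} without proof, so there is no in-paper argument to compare with) is the exact identity $\mathbf{M}\prod_{\|n\|\le N}(1+\lambda c_nX_n)=1$, valid because expanding the product produces only products of distinct $X_n$'s to the first power. Combined with the elementary inequality $1+x\ge e^{x-x^2}$ for $x\ge-\frac{1}{2}$, it yields the exponential bound $\mathbf{M}\exp\{\lambda\sum_n c_nX_n\}\le\exp\{\lambda^2S_N^2\}$ (real and imaginary parts of $c_ne^{i\langle n,\psi\rangle}$ treated separately), but only under the restriction $\lambda\max_n|c_n|\le\frac{1}{2}$. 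That restriction forces an extra step your sketch does not anticipate: when some $|c_n|$ are of size comparable to $S_N$, the admissible $\lambda$ is too small to reach the threshold $A S_N\ln^{1/2}N$, and the tail bound degenerates to a sub-exponential one. The cure is to split off the at most $A^2\ln N$ coefficients with $|c_n|>S_N/(A\sqrt{\ln N})$; by Cauchy--Schwarz their contribution to the supremum is \emph{deterministically} at most $AS_N\ln^{1/2}N$, and the exponential bound is applied only to the remaining small coefficients, for which $\lambda\asymp\sqrt{\ln N}/S_N$ is admissible. With that moment/exponential input in place, your Bernstein discretisation and union bound go through essentially verbatim.
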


\begin{Lemma}[\cite{3}]
  \sl Let $\delta>0.$
  There exists a set $E\subset[0,1)^p, E\in\Upsilon$
  such that for all $r\in[0,1)^p\backslash E$ the inequality
  \begin{gather*}
  \frac{\partial}{\partial r_s}\ln\mathfrak{M}_f(r)\leq(\ln\mathfrak{M}_f(r))^{1+\delta}\cdot\frac1{1-r_s}\cdot\prod_{\begin{substack} {j=1 \\ j\neq s}\end{substack}}^p
  \Bigl(\frac1{1-r_j}\Bigl)^{\delta},\ s\in\{1,\ldots,p\},
  \end{gather*}
  holds.
\end{Lemma}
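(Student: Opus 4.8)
The plan is to read this as a several-variable Borel--Nevanlinna lemma and to reduce it, one coordinate at a time, to the classical one-variable estimate for the logarithmic derivative of an increasing function. Put $\Phi(r):=\ln\mathfrak{M}_f(r)$, where $\mathfrak{M}_f(r)=\sum_{n}|a_n|r^n$. Since $f\in\mathcal{A}^p$ has domain of convergence $\mathbb{D}^p$, the series defining $\mathfrak{M}_f$ converges on $[0,1)^p$ and represents a real-analytic function there whose partial derivatives are obtained term by term; in particular each $\partial\mathfrak{M}_f/\partial r_s\ge0$, so $\Phi$ is smooth and nondecreasing in every variable. The inequality is meaningful only where $\Phi\ge0$ and concerns the behaviour near $\partial\mathbb{D}^p$, so I would fix a base point $r^{(0)}\in[0,1)^p$ with $\Phi(r^{(0)})\ge1$; this is possible in the only case of interest, $\mathfrak{M}_f(r)\to+\infty$, the bounded case being trivial.

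For each fixed $s\in\{1,\dots,p\}$ I would introduce the failure set
\[
E_s=\Bigl\{r\in[0,1)^p\colon \frac{\partial}{\partial r_s}\Phi(r)>\Phi(r)^{1+\delta}\cdot\frac1{1-r_s}\cdot\prod_{j\ne s}\Bigl(\frac1{1-r_j}\Bigr)^{\delta}\Bigr\}
\]
and prove that $E_s\in\Upsilon$ by estimating $\nu_{\rm ln}(E_s\cap\triangle_{r^{(0)}})$. On $E_s$ the defining inequality rearranges to
\[
\frac{dr_s}{1-r_s}<\prod_{j\ne s}(1-r_j)^{\delta}\cdot\frac{\partial_{r_s}\Phi(r)}{\Phi(r)^{1+\delta}}\,dr_s,
\]
so, holding the coordinates $r_j$ $(j\ne s)$ fixed and integrating over the $r_s$-slice of $E_s\cap\triangle_{r^{(0)}}$, the primitive $-\tfrac1\delta\Phi^{-\delta}$ together with $\Phi\ge\Phi(r^{(0)})\ge1$ yields
\[
\int\frac{dr_s}{1-r_s}<\frac1\delta\,\Phi(r^{(0)})^{-\delta}\prod_{j\ne s}(1-r_j)^{\delta}.
\]

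It then remains to integrate the surviving $p-1$ logarithmic differentials. By Fubini,
\[
\nu_{\rm ln}(E_s\cap\triangle_{r^{(0)}})<\frac1\delta\,\Phi(r^{(0)})^{-\delta}\prod_{j\ne s}\int_{r^{(0)}_j}^{1}(1-r_j)^{\delta-1}\,dr_j=\frac1\delta\,\Phi(r^{(0)})^{-\delta}\prod_{j\ne s}\frac{(1-r^{(0)}_j)^{\delta}}{\delta}<+\infty,
\]
each factor being finite precisely because $\delta>0$. Hence $E_s\in\Upsilon$, and setting $E=\bigcup_{s=1}^{p}E_s$ --- a finite union of sets of asymptotically finite logarithmic measure sharing the common base point $r^{(0)}$, hence again in $\Upsilon$ --- gives the required exceptional set: for $r\notin E$ the asserted bound holds simultaneously for all $s$.

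I expect the only genuine point to watch is the interplay of the coordinates. The off-diagonal factors $\prod_{j\ne s}(1-r_j)^{\delta}$ appearing in the statement are exactly what converts the otherwise divergent integrals $\int dr_j/(1-r_j)$ into the convergent integrals $\int(1-r_j)^{\delta-1}\,dr_j$; this is where the exponent $\delta$ on the cross terms is used in an essential way, and dropping it would destroy membership in $\Upsilon$. The remaining work is routine: justifying the termwise differentiation of the convergent series $\mathfrak{M}_f$, and checking that a single base point $r^{(0)}$ serves for all $s$ so that the finite union stays in $\Upsilon$.
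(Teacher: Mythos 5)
First, a point of fact: the paper contains no proof of this Lemma at all --- it is stated with the citation \cite{3} and used as an imported result --- so there is no in-paper argument to compare yours against, and your proposal must be judged on its own merits. On those merits it is correct, and it is the natural (surely the intended) argument: the several-variable Borel--Nevanlinna exceptional-set trick. Writing $\Phi=\ln\mathfrak{M}_f$ as you do, the essential steps all check out: $\mathfrak{M}_f$ is real-analytic on $[0,1)^p$ with $\partial_{r_s}\mathfrak{M}_f\ge 0$, so each failure set $E_s$ is open, hence measurable; on each $r_s$-slice of $E_s\cap\triangle_{r^{(0)}}$, positivity of the integrand, the primitive $-\Phi^{-\delta}/\delta$ and monotonicity of $\Phi$ in every variable give
\begin{equation*}
\int\frac{dr_s}{1-r_s}\;\le\;\prod_{j\ne s}(1-r_j)^{\delta}\int_{r^{(0)}_s}^{1}\frac{\partial_{r_s}\Phi}{\Phi^{1+\delta}}\,dr_s\;\le\;\frac{1}{\delta}\,\Phi(r^{(0)})^{-\delta}\prod_{j\ne s}(1-r_j)^{\delta},
\end{equation*}
and Tonelli then reduces $\nu_{\rm ln}(E_s\cap\triangle_{r^{(0)}})$ to the convergent integrals $\int_{r^{(0)}_j}^{1}(1-r_j)^{\delta-1}dr_j$, which is exactly where the cross factors $(1-r_j)^{\delta}$ are indispensable, as you correctly emphasize; finally, a finite union of the $E_s$ over a common base point stays in $\Upsilon$.

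Two small imprecisions are worth tightening, though neither is a genuine gap. Your reduction to the case $\mathfrak{M}_f\to+\infty$ is both unnecessary and slightly misstated: the computation never uses unboundedness, only the existence of one base point with $\ln\mathfrak{M}_f(r^{(0)})>0$ (which yields the finite constant $\Phi(r^{(0)})^{-\delta}/\delta$); and the bounded case is not literally ``trivial,'' since $\partial_{r_s}\ln\mathfrak{M}_f$ can still blow up there --- it is simply covered by the same computation, or by a separate dominated-convergence argument showing $(1-r_s)\,\partial_{r_s}\ln\mathfrak{M}_f\to 0$. Separately, if $\mathfrak{M}_f\le 1$ on all of $[0,1)^p$ (which does occur for some $f\in\mathcal{A}^p$), the right-hand side of the stated inequality is undefined; that is a defect of the Lemma as stated rather than of your proof, and whenever a base point with $\mathfrak{M}_f(r^{(0)})>1$ exists the whole region outside $\triangle_{r^{(0)}}$ can be absorbed into the exceptional set, precisely because membership in $\Upsilon$ constrains $E$ only inside some corner $\triangle_{r_0}$.
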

\begin{proof}[Proof of Theorem 2.3]
Without loss of generality we may suppose that
  $Z=X=(X_n(t))$ is a~MS (see \cite{7}).

For $k\in {\Bbb Z}_+$ and $l\in {\Bbb Z}$ such that $k>-l$ we denote
\begin{gather*}
G_{kl}=\Bigl\{r=(r_1,\ldots,r_p )\in [0;1)^p\colon\\
 k\leq \sum_{j=1}^p\ln\frac{1}{1-r_j}\leq k+1,\  l\leq \ln\mu_f(r)\leq l+1\Bigl\}, \ \
G_{kl}^+=\bigcup_{i=k}^{+\infty}\bigcup_{j=l}^{+\infty}G_{ij}.
\end{gather*}

Remark that the set
\begin{gather*}
E_0=\Bigl\{r\in[0;1)^p\colon \sum_{j=1}^p\ln \frac{1}{1-r_j}+\ln\mu_f(r)<1\Bigl\}=\\
=\Bigl\{r\in[0;1)^p\colon \mu_f(r)\prod_{j=1}^p\frac{1}{1-r_j}<e\Bigl\}\in\Upsilon,
\end{gather*}
because  there exists $r_0$ such that $E_0\cap[r_0;1)^p=\varnothing.$

By Lemma 3.2 there exists
a set $E_1\supset E_0, E_1\in\Upsilon$ such that for all
$r\in [0;1)^p\backslash E_1$ we have
\begin{gather*}
\sum_{\|n\|=0}^{+\infty}\|n\|\cdot|a_n|r^n\leq\mathfrak{M}_f(r)(\ln\mathfrak{M}_f(r))^{1+\delta}\cdot
\sum_{s=1}^p\Bigl(\frac1{1-r_s}\cdot\prod_{\begin{substack} {j=1 \\ j\neq s}\end{substack}}^p
  \Bigl(\frac1{1-r_j}\Bigl)^{\delta}\Bigl)\leq\\
\leq\mu_f(r)\Biggl(\prod_{j=1}^p\frac{1}{1-r_j}\cdot
    \ln^{p/2}\Biggl\{\mu_f(r)\prod_{j=1}^p\frac{1}{1-r_j}\Biggl\}\Biggl)^{1+\delta}\times\\
    \times \Bigl(\ln\mu_f(r)+(1+\delta)\Bigl(\sum_{j=1}^p\ln\frac1{1-r_j}+\frac{p}{2}\ln\Bigl(\ln\mu_f(r)+
    \sum_{j=1}^p\ln\frac1{1-r_j}\Bigl)\Bigl)\Bigl)^{1+\delta}\times\\
    \times \sum_{s=1}^p\Bigl(\frac1{1-r_s}\cdot\prod_{\begin{substack} {j=1 \\ j\neq s}\end{substack}}^p
  \Bigl(\frac1{1-r_j}\Bigl)^{\delta}\Bigl)\leq
  \\
  \leq\mu_f(r)\prod_{j=1}^p\frac{1}{(1-r_j)^{1+2\delta}}\cdot\sum_{j=1}^p\frac1{1-r_j}\cdot
  \ln^{p/2+1+p\delta}\Biggl\{\mu_f(r)\prod_{j=1}^p\frac{1}{1-r_j}\Biggl\}\leq\\
  \leq\mu_f(r)\prod_{j=1}^p\frac{1}{(1-r_j)^{2+2\delta}}\cdot
  \ln^{p/2+1+p\delta}\Biggl\{\mu_f(r)\prod_{j=1}^p\frac{1}{1-r_j}\Biggl\}.
\end{gather*}

Therefore
\begin{gather}\nonumber
\sum_{\|n\|\geq d}|a_n|r^n\leq \sum_{\|n\|\geq d}\frac{\|n\|}{d}|a_n|r^n
\leq\frac1{d}\sum_{\|n\|=0}^{+\infty}\|n\||a_n|r^n\leq\\
\label{8}
\leq\frac{1}{d}\mu_f(r)\prod_{j=1}^p\frac{1}{(1-r_j)^{2+2\delta}}\cdot
  \ln^{p/2+1+p\delta}\Biggl\{\mu_f(r)\prod_{j=1}^p\frac{1}{1-r_j}\Biggl\}\leq\mu_f(r),
  \end{gather}
where $$d=d(r)=\prod_{j=1}^p\frac{e^{2+3\delta}}{(1-r_j)^{2+3\delta}}\cdot
  \ln^{p/2+1+p\delta}\Biggl\{\mu_f(r)\prod_{j=1}^p\frac{1}{1-r_j}\Biggl\}.$$

   Let $G_{kl}^*=G_{kl}\setminus E_{2},$ $I=\{(i;j)\colon G_{ij}^*\neq\varnothing\},$
   $$
   E_{2}=E_1\cup\Biggl(\bigcup_{(i,j)\not\in I}G_{ij}\Biggl).
   $$

  Then
  $\#I=+\infty.$ For $(k,l)\in I$ we choose a sequence $r^{(k,l)}\in G_{kl}^*$
  such that
$
  \mu_f(r^{(k,l)})=\min\limits_{r\in G^*_{kl}}\mu_f(r).
$
 Then for all $r\in G_{kl}^*$ we get
  \begin{gather}\label{s1}
 \mu_f(r^{(k,l)})\leq\mu_f(r)\leq  e\mu_f(r^{(k,l)}),\\
 \label{s2}
\frac{1}{e} \prod_{j=1}^p\frac{1}{1-r_j^{(k,l)}}\leq\prod_{j=1}^p\frac{1}{1-r_j}\leq e\prod_{j=1}^p\frac{1}{1-r_j^{(k,l)}},\\
 \label{s3}
 \frac{1}{e^2}\mu_f(r^{(k,l)})\prod_{j=1}^p\frac{1}{1-r_j^{(k,l)}}\leq\mu_f(r)\prod_{j=1}^p\frac{1}{1-r_j}
 \leq  e^2 \mu_f(r^{(k,l)})\prod_{j=1}^p\frac{1}{1-r_j^{(k,l)}}
 \end{gather}
  and also
$$
\bigcup_{(k,l)\in I}G_{kl}^*=\bigcup_{(k,l)\in I}G_{kl}\setminus E_{1}=\bigcup_{k,l=1}^{+\infty}G_{kl}
\setminus E_{1}=[0;1)^p\setminus E_{1}.
$$
Denote $N_{kl}=[2d_1(r^{(k,l)})],$ where
 $$
 d_1(r)=\prod_{j=1}^p\frac{e^{2+3\delta}}{(1-r_j)^{2+3\delta}}\cdot
  \ln^{p/2+1+p\delta}\Biggl\{e^2\mu_f(r)\prod_{j=1}^p\frac{1}{1-r_j}\Biggl\}.
 $$
  For     $r\in G_{kl}^*$ we put
 $$
 W_{N_{kl}}(r,t)=\max\left\{\left|\sum_{\|n\|\leq
 N_{kl}}a_{n}r_1^{n_1}\ldots r_p^{n_p}e^{in_1\psi_1+\ldots+in_p\psi_p}X_{n}(t)\right|\colon\psi
 \in[0,2\pi]^p\right\}.
 $$
    For a Lebesgue measurable set $G\subset G_{kl}^*$ and for $(k,l)\in I$ we denote
 \begin{equation*}
  \nu_{kl}(G)=\frac{{\rm meas}_p(G)}{{\rm meas}_p(G_{kl}^*)},
 \end{equation*}
where meas$_p$ denotes the Lebesgue measure on $\mathbb{R}^p.$

 Remark that  $\nu_{kl}$ is a probability measure defined on the family of Lebesgue measurable subsets of
 $G_k^*$ (\cite{7}). Let
 $\Omega=\bigcup_{(k,l)\in I}G_{kl}^*$ and
 $$
 k_i, l_{i,j}\colon (k_i, l_{i,j})\in I,\ k_i<k_{i+1},\ l_{i,j}<l_{i,j+1},\
 \forall i,j\in\mathbb{Z}_+.
 $$
  For
 Lebesgue measurable subsets  $G$ of $\Omega$  we denote
 \begin{gather}\nonumber
 \nu(G)=2^{k_0}\sum_{i=0}^{+\infty}\Biggl(\frac{1}{2^{k_i}}\Big(1-\Big(\frac{1}{2}\Big)^{k_{i+1}
 -k_i}\Big)\times\\
 \label{9}
 \times\sum_{j=0}^{N_i}\frac{2^{l_{i,0}}}{2^{l_{i,j}}}\frac{\Big(1-\Big(\frac{1}{2}\Big)^{l_{i,j+1}
 -l_{i,j}}\Big)}{1-\big(\frac12\big)^{l_{i,N_i}}}\nu_{k_{i+1}l_{i+1,j+1}}(G\cap G^*_{k_{j+1}l_{i+1,j+1}})\Biggl),
  \end{gather}
  where $N_i=\max\{j\colon (k_i,l_{ij})\in I\}.$
  Remark that $\nu_{k_{j+1}l_{j+1}}(G^*_{k_{j+1}l_{j+1}})=\nu(\Omega)=1.$

   Thus $\nu$ is a probability measure, which is defined on measurable
   subsets of
     $\Omega.$ On $[0,1]\times\Omega$
 we define the probability measure $P_0=P\otimes\nu,$
 which is a direct product of the probability measures $P$ and $\nu.$
 Now for $(k;l)\in I$ we define
 \begin{gather*}
 F_{kl}=\{(t,r)\in
 [0,1]\times\Omega\colon
 W_{N_{kl}}(r,t)>A_{p}S_{N_{kl}}(r)\ln^{1/2}N_{kl}\},\\
 F_{kl}(r)=\{t\in[0,1]\colon
 W_{N_{kl}}(r,t)>A_{p}S_{N_{kl}}(r)\ln^{1/2}N_{kl}\},
 \end{gather*}
  where $S^2_{N_{kl}}(r)=\sum_{\|n\|=0}^{N_{kl}}|a_{n}|^2r^{2n}$
  and $A_p$ is the constant from Lemma 3.2 with $\beta=1.$ Using Fubini's theorem and Lemma 1 with
 $c_n=a_nr^n$ and $\beta=1,$ we get for
 $(k,l)\in I$
 $$
 P_0(F_{kl})=\int\limits_{\Omega}\Bigg(\int\limits_{F_{kl}(r)}dP\Bigg)d\nu=\int\limits_{\Omega}P(F_{kl}(r))d\nu
 \leq\frac{1}{N_{kl}}\nu(\Omega)=\frac{1}{N_{kl}}.
 $$

 Note that
 $$
 N_{kl}>\prod_{j=1}^p\frac{1}{(1-r_j^{(k,l)})^2}\ln^{p/2+1+p\delta}\Bigl\{\mu_f(r^{(k,l)})\prod_{j=1}^p\frac{1}{1-r_j^{(k,l)}}\Bigl\}\geq e^k(l+k)^{2+p\delta}.
 $$
 Therefore
$$
\sum_{(k,l)\in I}P_0(F_{kl})\leq\sum_{k=1}^{+\infty}\sum_{l=-k+1}^{+\infty}\frac{1}{e^k(l+k)^{2+p\delta}}<+\infty.
$$

 By Borel-Cantelli's lemma the infinite quantity
of the events $\{F_{kl}\colon (k,l)\in I\}$ may occur with probability zero. So,
$$P_0(F)=1, \quad
F=\bigcup_{s=1}^{+\infty}\bigcup_{m=1}^{+\infty}\bigcap_{\begin{substack} {k\geq  s,\  l\geq m \\ (k,l)\in
I}\end{substack}}\overline{F_{kl}}\subset [0,1]\times \Omega.$$

Then for any point
$(t,r)\in F$ there exist $k_0=k_0(t,r)$ and $l_0=l_0(t,r)$ such that for all
$k\geq k_0,$ $l\geq l_0,$ $(k,l)\in I$ we have
\begin{equation}\nonumber
 W_{N_{kl}}(r,t)\leq A_{p}S_{N_{kl}}(r)\ln^{1/2}N_{kl}.
 \end{equation}

%

So, $\nu(F^{\wedge}(t))=1$ (see \cite{7}).


  For any $t\in F_1$(\cite{7}) and $(k,l)\in I$ we choose a point $r_0^{(k,l)}(t)\in
G_{kl}^*$ such that
$$
W_{N_{kl}}(r_0^{(k,l)}(t),t)\geq\frac{3}{4}M_{kl}(t),\
M_{kl}(t)\stackrel{{\rm def}}{=} \sup\{W_{N_{kl}}(r,t)\colon r\in G_{kl}^*\}.
$$

Then from $\nu_{kl}(F^{\wedge}(t)\cap G_{kl}^*)=1$ for all $(k,l) \in I$ it follows that
there exists a point  $r^{(k,l)}(t)\in G_{kl}^*\cap
F^{\wedge}(t)$ such that
$$
|W_{N_{kl}}(r_0^{(k,l)}(t),t)-W_{N_{kl}}(r^{(k,l)}(t),t)|<\frac{1}{4}M_{kl}(t)
$$
  or
  $$
  \frac{3}{4}M_{kl}(t)\leq W_{N_{kl}}(r_0^{(k,l)}(t),t)\leq
W_{N_{kl}}(r^{(k,l)}(t),t)+\frac{1}{4}M_{kl}(t).
$$
 Since
$(t,r^{(k,l)}(t))\in F,$  from inequality (\ref{9}) we obtain
\begin{equation} \nonumber
\frac{1}{2}M_{kl}(t)\leq
W_{N_{kl}}(r^{(k,l)}(t),t)\leq  A_p
 S_{N_{kl}}(r^{(k,l)}(t))\ln^{1/2}N_{kl}.
\end{equation}
 Now for
$r^{(k,l)}=r^{(k,l)}(t)$ we get
\begin{gather*}
S^2_{N_{kl}}(r^{(k,l)})\leq\mu_f(r^{(k,l)})\mathfrak{M} _f(r^{(k,l)})\leq\\
\leq
\mu_f^2(r^{(k,l)})\Biggl(\prod_{j=1}^p\frac{1}{1-r_j^{(k,l)}}
    \ln^{p/2}\Biggl\{\mu_f(r^{(k,l)})\prod_{j=1}^p\frac{1}{1-r_j^{(k,l)}}\Biggl\}\Biggl)^{\!\!1+\delta}\!\!\!.
\end{gather*}
So, for  $t\in  F_1$ and all  $k\geq k_0(t),$ $l\geq l_0(t),$ we obtain
\begin{equation}\label{13}
S_N(r^{(k,l)})\leq\mu_f(r^{(k,l)})\Biggl(\prod_{j=1}^p\frac{1}{1-r_j^{(k,l)}}
    \ln^{p/2}\Biggl\{\mu_f(r^{(k,l)})\prod_{j=1}^p\frac{1}{1-r_j^{(k,l)}}\Biggl\}\Biggl)^{1/2+\delta/2}.
 \end{equation}

  It follows from (\ref{s1})--(\ref{s3}) that $d_1(r^{(k,l)})\geq d(r)$ for $r\in G_{kl}^*.$ Then for
  $t\in F_1,$ $r\in
F^{\wedge}(t)\cap  G_{kl}^*,$ $(k,l)\in I,\ k\geq k_0(t), l\geq l_0(t)$ we get
$$
M_f(r,t)\leq
 \sum_{\|n\|\geq
2d_1(r^{(k,l)})}|a_{n}|r^n+W_{N_{kl}}(r,t)\leq
\sum_{\|n\|\geq2d(r)}|a_{n}|r^n+M_{kl}(t).
$$

  Finally for $t\in F_1,$  $r\in
F^{\wedge}(t)\cap G_{kl}^*,$ $l\geq l_0(t)$ and $k\geq k_0(t)$  we obtain
\begin{gather*}
M_f(r^{(k,l)},t)\leq \mu_f(r^{(k,l)})+2A_{p}S_{N_{kl}}(r^{(k,l)})\ln^{1/2}N_{kl}\leq\mu_f(r^{(k,l)})+\\
+
2A_{p}\mu_f(r^{(k,l)})\Biggl(\prod_{j=1}^p\frac{1}{1-r_j^{(k,l)}}
    \ln^{p/2}\Biggl\{\mu_f(r^{(k,l)})\prod_{j=1}^p\frac{1}{1-r_j^{(k,l)}}\Biggl\}\Biggl)^{1/2+\delta/2}\times\\
    \times\ln\Biggl(6\prod_{j=1}^p\frac{1}{(1-r_j^{(k,l)})^{2+3\delta}}\cdot
  \ln^{p/2+1+p\delta}\Biggl\{e^2\mu_f(r^{(k,l)})\prod_{j=1}^p\frac{1}{1-r_j^{(k,l)}}\Biggl\}\Biggl).
\end{gather*}
So,
we get for $t\in F_1,$ $r\in F^{\wedge}(t)\cap G_{kl}^*,$ $k\geq
 k_0(t)$ and
$l\geq
 l_0(t)$
  \begin{equation}\label{14}
   M_f(r,t)\leq \mu_f(r)\Biggl(\prod_{j=1}^p\frac{1}{(1-r_j)^{1/2}}\cdot
    \ln^{p/4}\Biggl\{\mu_f(r)\prod_{j=1}^p\frac{1}{1-r_j}\Biggl\}\Biggl)^{1+\delta}.
\end{equation}

Therefore inequality (\ref{14}) holds almost surely
($t\in F_1,$ $P(F_1)=1$) for all
\begin{gather*}
r\in \Bigl(\bigcup_{(k,l)\in I}(G_{kl}^*\cap F^{\wedge}(t))\cap
G_{kl}^+\Bigl)\setminus E^*=\\
= ([0;1)^p\cap
G^+_{kl})\setminus(E^*\cup G^*\cup E_{1})
=[0;1)^p\setminus E_{2},
\end{gather*}
where
$$
G^+_{kl}=\bigcup_{i=k}^{+\infty}\bigcup_{j=l}^{+\infty}G_{kl},\
E_{2}=E_{1}\cup G^*\cup E^*,\ G^*=\bigcup_{(k,l)\in I}(G_{kl}^*\setminus
F^{\wedge}(t)).
$$

  It remains to remark that $\nu(G^*)$ satisfies
$\nu(G^*)=\sum_{(k,l)\in I}(\nu_{kl}(G_{kl}^*)-\nu_{kl}(F^{\wedge}(t)))$ $=0.$ Then for all $(k,l)\in I$ we obtain
\begin{gather*}
\nu_{kl}(G_{kl}^*\setminus
F^{\wedge}(t))=\frac{{\rm meas}_p(G_{kl}^*\setminus
F^{\wedge}(t))}{{\rm meas}_p(G_{kl}^*)}=0,\\
{\rm
meas}_p(G_{kl}^*\setminus F^{\wedge}(t))=\idotsint\limits_{G_{kl}^*\setminus
F^{\wedge}(t)}\frac{dr_1\ldots dr_p}{(1-r_1)\ldots (1-r_p)}=0.
\end{gather*}
\end{proof}

\subsection{Proof of theorem 2.4.}
Consider the function
$$
f(z)=\prod_{j=1}^pf_0(z_j),\ z=(z_1,\ldots, z_p)\in\mathbb{D}^p,\
f_0(\tau)=\sum_{k=1}^{+\infty}e^{\sqrt{k}/2}\tau^{k}, \ \tau\in\mathbb{D}.
$$

The function $g(t)=\ln\frac{\mu_{f_0}(t)}{1-t}$ is positive continuous
increasing on $(1/2;1)$, $\lim\limits_{t\to1-0}g(t)=+\infty.$
Therefore, there exists the inverse function $g^{-1}\colon \mathbb{R}_+\to(1/2;1).$

For the function $f(z)$ and $r\in[0;1)^p$ we have
$$
M_f(r)=\prod_{j=1}^pM_{f_0}(r_j),\ \mu_f(r)=\prod_{j=1}^p\mu_{f_0}(r_j).
$$

It follows from Theorem 1.2, that there exist $t'\in (0;1)$ and a constant $C_1>0$ such that for $t\in(t',1)$ we get
\begin{equation}
M_{f_0}(t)\geq C_1\frac{\mu_{f_0}(t)}{\sqrt{1-t}}\ln^{1/4}\frac{\mu_{f_0}(t)}{1-t}.
\end{equation}

Let us prove inequality
\begin{equation}\label{2s}
g^{-1}(3g(t))-g^{-1}\Bigl(\frac{g(t)}{3}\Bigl)>1-g^{-1}(3g(t)),\ t\to1-0.
\end{equation}

For fixed $t\in(0;1)$ we consider the function $l(x)=\frac12\sqrt{x}-x\ln\frac{1}{t}.$
$x_{\max}=\frac{1}{16\ln^2\frac{1}{t}}$ is unique maximum point of the function $l(x)$.
Thus
\begin{gather*}
\max\{l(x)\colon x>0\}=l_{\max}=\frac{1}{16\ln\frac{1}{t}},
\\
g(t)=\ln\frac{\mu_{f_0}(t)}{1-t}\sim\ln\mu_{f_0}(t)\sim\frac{1}{16\ln\frac{1}{t}}\sim\frac{1}{16(1-t)},\ t\to1-0.
\end{gather*}

Then $g(t)<3g(2t-1),\ t\to1-0.$
Therefore,
$$
g(2t-1)>\frac{g(t)}{3},\
2t-1>g^{-1}\Bigl(\frac{g(t)}{3}\Bigl),\
t-g^{-1}\Bigl(\frac{g(t)}{3}\Bigl)>1-t.
$$
Using
$g^{-1}(3g(t))>g^{-1}(g(t))=t$ we obtain as $t\to1-0$
$$
g^{-1}(3g(t))-g^{-1}\Bigl(\frac{g(t)}{3}\Bigl)>t-g^{-1}\Bigl(\frac{g(t)}{3}\Bigl)>1-t>1-g^{-1}(3g(t)).
$$
So, inequality (\ref{2s}) is proved.

There exist a constant $C_1\in(0,1)$ and
$r^*\in(r',1)$ such that for all $z\in\{z\colon t^*<|z_k|<1,$ $k\in\{1,\ldots,p\}\}$ we obtain
\begin{equation}\label{8}
  M_{f_0}(r_k)\geq C_1\frac{\mu_{f_0}(r_k)}{\sqrt{1-r_k}}
  \ln^{1/4}\frac{\mu_{f_0}(r_k)}{1-r_k}
  \ \mbox{ and }\ g^{-1}\Bigl(\frac{g(t^*)}{3}\Bigl)>r^0.
\end{equation}

Then for all $z\in\{z\colon r^*<|z_j|<1,\ j\in\{1,\ldots,p\}\}$ we have
\begin{gather}
\nonumber
\prod_{i=1}^pM_{f_0}(r_i)\geq\prod_{i=1}^p\Biggl(C_1\frac{\mu_{f_0}(r_i)}{\sqrt{1-r_i}}
  \ln^{1/4}\frac{\mu_{f_0}(r_i)}{1-r_i}\Biggl),\\
  \label{3s}
  M_f(r)\geq C_1^p\mu_f(r)\prod_{i=1}^p\frac{1}{\sqrt{1-r_i}}
  \Biggl(\prod_{i=1}^p\ln\frac{\mu_{f_0}(r_i)}{1-r_i}\Biggl)^{1/4}.
\end{gather}

For $r_1\in(t^*,1)$ we define $x$ and $y$ such that
$$
x=x(r_1)=g^{-1}\Bigl(\frac{g(r_1)}{3}\Bigl),\  y=y(r_1)=g^{-1}(3g(r_1)).
$$

Let
$
E^*=\{r\in[0,1)^p\colon r_1\in(t^*,1),\ r_i\in(x, y),\ i\in\{2,\ldots,p\}\}.
$

Fix $r_1\in(r^*,1).$ Then $x$ and $y$ are also fixed and
 $$
 g(x)=g(r_1)/3,\ g(y)=3g(r_1),\ g(y)=9g(x),\ (r_2,\ldots,r_p)\in(x,y)^{p-1}.
 $$
  Therefore, using that $r_1>x$ we get for all $r\in E^*$
\begin{gather*}
  \prod_{i=1}^pg(r_i)\geq g^{p}(x)=\frac{g^{p}(y)}{9^p}=
  \frac{1}{(9p)^p}(\underbrace{g(y)+\ldots+g(y)}_{p})^p\geq\\
  \geq\frac{1}{(9p)^p}(g(r_1)+\ldots+g(r_p))^p=\frac{1}{(9p)^p}\Biggl(\sum_{i=1}^pg(r_i)\Biggl)^p.
\end{gather*}

Therefore we get for all $r\in E^*$
\begin{gather*}
M_f(r)\geq C_1^p\mu_f(r)\prod_{i=1}^p\frac{1}{\sqrt{1-r_i}}\cdot\frac{1}{(9p)^p}
\Biggl(\sum_{i=1}^p\ln\frac{\mu_{f_0}(r_i)}{1-r_i}\Biggl)^{p/4}=\\
=C_2\mu_f(r)\prod_{i=1}^p\frac{1}{\sqrt{1-r_i}}\ln^{p/4}\Biggl(\mu_f(r)\prod_{i=1}^p\frac{1}{1-r_i}\Biggl).
\end{gather*}

It remains to prove, that set $E^*$ is a set of infinite asymptotically logarithmic measure.
Since $g^{-1}\Bigl(\frac{g(r^*)}{3}\Bigl)>r^0$ then $E^*\cap\triangle_{r^0}=E^*.$ Finally
\begin{gather*}
\nu_{\rm ln}(E^*\cap\triangle_{r^0})=\nu_{\rm ln}(E^*)=
\idotsint\limits_{E^*}\prod_{i=1}^p\frac{dr_i}{1-r_i}
=
\int\limits_{t^*}^1\underbrace{\int\limits_{x}^{y}\ldots\int\limits_{x}^{y}}_{p-1}\prod_{i=1}^p\frac{dr_i}{1-r_i}=\\
=\int\limits_{t^*}^1\Biggl(\int\limits_{x}^{y}\frac{dr_2}{1-r_2}\Biggl)^{p-1}\frac{dr_1}{1-r_1}
=\int\limits_{t^*}^1\Biggl(\ln\frac{1}{1-y}-\ln\frac{1}{1-x}\Biggl)^{p-1}\frac{dr_1}{1-r_1}=
\\
=\int\limits_{t^*}^1\Biggl(\ln\frac{1}{1-g^{-1}(3g(r_1))}-\ln\frac{1}{1-g^{-1}(\frac{g(r_1)}{3})}\Biggl)^{p-1}\frac{dr_1}{1-r_1}=
\\
=\int\limits_{t^*}^1\ln^{p-1}\frac{1-g^{-1}(\frac{g(r_1)}{3})}{1-g^{-1}(3g(r_1))}\frac{dr_1}{1-r_1}=
\\
=\int\limits_{t^*}^1\ln^{p-1}\Bigl(1+\frac{g^{-1}(3g(r_1))-g^{-1}(\frac{g(r_1)}{3})}{1-g^{-1}(3g(r_1))}\Bigl)\frac{dr_1}{1-r_1}
>\int\limits_{t^*}^1\ln^{p-1}2\cdot\frac{dr_1}{1-r_1}=+\infty.
\end{gather*}



\begin{thebibliography}{99}



%
%

\bibitem{1} T. K\H{o}vari,
        {On the maximum modulus and maximal term of functions analytic in
        the unit disc, \em J. London Math. Soc.}, {\bf 41} (1966), 129 - 137.

\bibitem{2} N.M. Suleymanov,
        {Wiman-Valiron's type inequalities for power series with bounded radii of convergence and its sharpness, \em DAN SSSR},
        {\bf 253} (1980), no.4, 822 - 824. (in Russian)

\bibitem{3} A.O. Kuryliak, L.O. Shapovalovska, O.B. Skaskiv,  {Wiman's type inequality for analytic functions in the polydisc,
\em Ukr. Mat. Zh}.,
 (2015). (accepted, in Ukrainian)

 \bibitem{4} P.V. Filevych,  {Wiman-Valioron's type estimates for the random
 analytic functions in the unit disc, \em Integral transformations and its application for boundary problems. Kuiv: Institute of Math.},
  {\bf 15} (1997), 227 - 238. (in Ukrainian)

\bibitem{5} O.B. Skaskiv, A.O. Kuryliak, {Direct analogues of Wiman's inequality for func\-tions analytic in the unit disc, \em
~Carp. Mat. Publ.}, {\bf2} (2010), no.1, 109 - 118. (in Ukrainian)

\bibitem{6} A.O. Kuryliak, O.B. Skaskiv,  {Wiman's type inequalities
without exceptional sets for random entire functions of several variables, \em     Mat. Stud.}, {\bf 38} (2012), no.1, 35 - 50.


\bibitem{7} A.O. Kuryliak, O.B. Skaskiv, O.V. Zrum, {Levy's phenomenon for entire functions of several variables, \em Ufimsky Mat. Journ.},
{\bf 6} (2014), no.2, 118 - 127.

\bibitem{10} A.O. Kuryliak, O.B. Skaskiv, I.E. Chyzhykov  {Baire categories and Wiman's inequality for analytic functions, \em Bulletin de la soci$\rm\acute{e}$t$\rm\acute{e}$ des sciences et des lettres de L$\rm\acute{o}$d$\rm\acute{z}$.}, {\bf 62} (2012), no.3, 17 - 33.

\bibitem{11} A.O. Kuryliak, O.B. Skaskiv, I.E. Chyzhykov  {Baire categories and Wiman's inequality for analytic functions, \em arXiv: 1206.3655v1.}, (2012),
17~p.



\bibitem{8} A. Shumitzky,  {A probabilistic approach to the Wiman-Valiron's theory for entire functions of
    several complex variables, \em Complex Variables}, {\bf 13} (1989), 85 - 98.



 \bibitem{9} P.C. Fenton,  {Wiman-Valiron theory in two variables, \em  Trans. Amer. Math. Soc.}, {\bf 347} (1995), no.11, 4403 - 4412.


 \bibitem{10} A.O. Kuryliak, O.B. Skaskiv, S.R. Skaskiv,  {Analogues Wiman's inequality and Levy's phenomenon for
 analytic functions in bidisc, \em  Bukovinian Math. Journ.}, {\bf 3} (2015), no.3-4, 102 - 110.


\bibitem{11} O.B. Skaskiv, O.V. Zrum,  {On Wiman's inequality of random entire functions of two variables, \em  Mat. Stud.}, {\bf 23} (2005), no.2, 149 - 160. (in Ukrainian)



\end{thebibliography}
\end{document}